\documentclass{amsart}
\usepackage{amsthm}
\usepackage{amsfonts}
\usepackage{amsmath}
\usepackage{caption}
\usepackage{graphicx}
\usepackage{comment}
\usepackage{amscd}
\usepackage{tikz-cd}

 \newcommand{\GG}{\mathbb{G}}
 \newcommand{\PP}{\mathbb{P}}
 \newcommand{\KK}{\mathbb{K}}
\begin{document}

\title{Commutative actions on smooth projective quadrics}
\author{Viktoriia Borovik}
\address{Lomonosov Moscow State University, Faculty of Mechanics and Mathematics, Department of Higher Algebra, Leninskie Gory 1, Moscow, 119991 Russia; \linebreak and \linebreak
National Research University Higher School of Economics, Faculty of Computer Science, Pokrovsky Boulevard 11, Moscow, 109028, Russia}
\email{vborovik@hse.ru}

\author{Sergey Gaifullin}
\address{Lomonosov Moscow State University, Faculty of Mechanics and Mathematics, Department of Higher Algebra, Leninskie Gory 1, Moscow, 119991 Russia; \linebreak and \linebreak
National Research University Higher School of Economics, Faculty of Computer Science, Pokrovsky Boulevard 11, Moscow, 109028, Russia}
\email{sgayf@yandex.ru}

\author{Anton Trushin}
\address{Lomonosov Moscow State University, Faculty of Mechanics and Mathematics, Department of Higher Algebra, Leninskie Gory 1, Moscow, 119991 Russia}
\email{ornkano@mail.ru}

\subjclass[2020]{Primary 14L30,  14J50; Secondary 13A50, 14J70.}
\keywords{Algebraic group action, commutative group, projective quadric, commutative algebra.}

\thanks{The second author was supported by RSF grant 19-11-00172.}
\date{\today}
\maketitle 

\newtheorem{theorem}{Theorem}
\newtheorem{lemma}{Lemma}
\newtheorem{proposition}{Proposition}
\newtheorem{cor}{Corollary}
\theoremstyle{remark}
\newtheorem{remark}{Remark}
\theoremstyle{definition}
\newtheorem{definition}{Definition}

\sloppy
\textwidth=16.3cm
\oddsidemargin=0cm
\topmargin=0cm
\headheight=0cm
\headsep=1cm
\textheight=23.5cm
\evensidemargin=0cm

\begin{abstract}

By a commutative action on a smooth quadric $Q_{n}$ in $\mathbb{P}^{n+1}$ we mean an effective action of a commutative connected algebraic group on $Q_{n}$ with an open orbit. 
We show that for $n\geq 3$ all commutative actions on $Q_n$ are additive actions described by Sharoiko in 2009. So there is a unique commutative action on $Q_n$ up to equivalence. For $n=2$ there are three commutative actions on $Q_2$ up to equivalence, for $n=1$ there are two commutative actions on $Q_1$ up to equivalence.
\end{abstract}

\section{Introduction}

Let $\mathbb{K}$ be an algebraically closed field of characteristic zero. Let $\mathbb{G}_{a}$ and $\mathbb{G}_{m}$ be the additive group $(\mathbb{K}, +)$ and the multiplicative group $(\mathbb{K}^{\times}, \cdot)$, respectively. By a {\it commutative action} on an $n$-dimensional projective algebraic variety $X$ we mean an effective regular action of a commutative linear algebraic group $G$ on $X$ with an open orbit. It is well known that $G$ splits into direct product of multiplicative and additive groups, i.e. $G \simeq \mathbb{G}_{a}^{l} \times \mathbb{G}_{m}^{r}$, where $n=l+r=\dim G$. If $l=0$ and $r=n$ we obtain a well-known theory of toric varieties. In the opposite case $r=0$ and $l=n$ we have an action of commutative unipotent group $\GG_a^n$. Such actions are called {\it additive actions}.
We call two actions $\alpha\colon~G\times~X\rightarrow X$ and $\beta\colon G\times~X\rightarrow~X$ {\it equivalent} if there are an automorphism $\varphi\colon G\rightarrow~G$ and an automorphism $\psi\colon X\rightarrow~X$ such that $\alpha(\varphi(g),\psi(x))=\psi(\beta(g,x)).$ If we are given a projective variety $X$ it is a natural problem to describe all commutative actions on~$X$ up to equivalence. 

This problem is completely solved when $X$ is a projective space.  The study of commutative actions on projective spaces was initiated by Knop and Lange. They showed in \cite{KL} that equivalence classes of commutative actions on the projective space $\mathbb{P}^{n}$ are in bijection with isomorphism classes of ${(n+1)}$-dimensional commutative associative algebras with unit. The  particular case of additive actions on the projective space  $\PP^n$ became popular after the paper \cite{HTsch} by Hassett and Tschinkel. They prove that additive actions correspond to local commutative associative algebras with unit. Moreover, there is more general correspondence. Let us say that an action of a group on $\mathbb{P}^n$ is {\it cyclic} if there exists an orbit that is not contained in any proper subspace.  There is a bijection between equivalence classes of effective cyclic $\GG_a^n$-actions on $\PP^m$ and isomorphism classes of pairs $(A,U)$, where~$A$ is a local commutative associative $(m+1)$-dimensional algebra with the maximal ideal~$\mathfrak{m}$ and $U$ is an $n$-dimensional subspace of $\mathfrak{m}$ that generates $A$ as an algebra with unit. These correspondences inspired recent works studying additive actions on projective varieties, see, e.g. \cite{A, AR, D, F, FH, L}.

The next natural particular case of the problem above is the case of hypersurfaces $X_n\subseteq \PP^{n+1}$. In 2009, Sharoiko \cite{Sh} used the Hassett-Tschinkel correspondence to describe all additive actions on smooth projective quadrics $Q_n\subseteq \PP^{n+1}$, see also~\cite{ASh}. It was shown that for each $n$ there is a unique additive action on~$Q_n$ up to equivalence.  The main ideas of  \cite{Sh} is as follows. Each additive action on $Q_n$ can be lifted to an effective $\GG_a^n$-action on $\PP^{n+1}$. Such an action corresponds to a local commutative associative $(n+2)$-dimensional algebra $A$ with an $n$-dimensional subspace $U$ in $\mathfrak{m}$ generating~$A$. The condition that $Q_n$ is invariant under the action allows to define a scalar product $F$ on $A$ having some relations with multiplication on~$A$. It turns out that for each $n$ the triple $(A,U,F)$ is unique up to isomorphism.

In~\cite{AP} Arzhantsev and Popovskiy describe all additive actions on quadrics of corank 1. In \cite{B} Bazhov proves some generalizations for cubics. In this paper we give another generalization of the result of~\cite{Sh}. We describe all commutative actions on smooth projective quadrics. 

\begin{theorem}
There are only three equivalence classes of non-additive commutative action on smooth projective quadric:
\begin{itemize}
    \item $\mathbb{G}_m$-action on $Q_1$;
    \item $\mathbb{G}_m^2$-action on $Q_2$;
    \item $\mathbb{G}_a\times\mathbb{b}G_m$-action on $Q_2$.
\end{itemize}In particular, for all $n\geq 3$ there is no commutative action on an $n$-dimensional smooth projective quadric $Q_n$ that are not additive. 
\end{theorem}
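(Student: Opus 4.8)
The plan is to reduce everything to commutative algebra, following Knop--Lange and Sharoiko, and then to run a dimension/rank bookkeeping that collapses the non-additive possibilities to low dimension.

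\emph{Step 1 (linearization and algebra data).} For $n\ge 3$ one has $\mathrm{Aut}(Q_n)^{\circ}=\mathrm{PSO}_{n+2}$, so a connected commutative $G$ acting on $Q_n$ acts linearly on $\mathbb{P}^{n+1}=\mathbb{P}(V)$ ($\dim V=n+2$) and preserves up to scalar the nondegenerate quadratic form $\Phi$ defining $Q_n$; the cases $n=1,2$, where $\mathrm{Aut}^{\circ}$ is $\mathrm{PGL}_2$ and $\mathrm{PGL}_2\times\mathrm{PGL}_2$, are the source of the extra actions and are treated separately at the end. A point of the open orbit is a cyclic vector, so as in Knop--Lange we get a commutative associative unital algebra $A$ with $\dim A=n+2$ and $V\cong A$ under which that orbit point becomes $\mathbf 1$; here $U:=\mathrm{Lie}(G)\subseteq A$ is an $n$-dimensional generating subspace with $\mathbf 1\notin U$, and $G$ acts through the $(n+1)$-dimensional subgroup $H\subseteq A^{\times}$ generated by $U$ and the scalars. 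Invariance of $Q_n$ gives a nondegenerate symmetric bilinear form $B$ on $A$ and a linear form $\lambda$ on $U$ (vanishing on $U\cap\mathrm{nil}(A)$) with
\[
B(uv,w)+B(uw,v)=\lambda(u)\,B(v,w),\qquad u\in U,\ v,w\in A .
\]
Moreover the $H$-orbit of $\mathbf 1$ is dense in the affine cone $\widehat{Q}_n=\{\Phi=0\}$, so $\widehat Q_n=\overline H$ is irreducible of dimension $n+1$ and smoothness of $Q_n$ amounts to $\mathrm{rank}\,\Phi=n+2$. The action is additive iff $A$ is local (iff $U\subseteq\mathrm{nil}(A)$, iff $\lambda=0$), in which case Sharoiko's theorem gives uniqueness.

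\emph{Step 2 (non-local $A$: structure).} Let $A=A_1\times\dots\times A_k$, $k\ge 2$, with idempotents $e_i$ and maximal ideals $\mathfrak m_i$. Running the relation and the nondegeneracy of $B$ through the weight decomposition of $A$ under a maximal torus of $H$ shows that $B$ pairs up the local factors: there is an involution $i\mapsto i^{\ast}$ with $B(A_i,A_j)=0$ for $j\neq i^{\ast}$ and $B\colon A_i\times A_{i^{\ast}}\to\mathbb{K}$ a perfect pairing, so $\dim A_i=\dim A_{i^{\ast}}$. A self-dual factor $A_i=A_{i^{\ast}}$ would give an $H$-stable orthogonal splitting $A=A_i\perp A_i^{\perp}$, hence $\overline{H\cdot\mathbf 1}\subseteq\overline{H_i\cdot e_i}\times\overline{H_{i^{\perp}}\cdot\mathbf 1_{i^{\perp}}}$; since $\widehat Q_n$ is \emph{not} a product, this product must be all of $A$, forcing $A_i^{\times}\subseteq\mathrm{GSO}(B|_{A_i})$, and a short computation shows a local algebra on whose form all units act conformally is $\mathbb{K}$. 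Finally, restricting the relation to $u\in U\cap\mathrm{nil}(A)$ (where $\lambda(u)=0$) shows that on each non-diagonal pair the nilpotent part of $u$ in $A_i$ determines that in $A_{i^{\ast}}$; together with $\dim(U\cap\mathrm{nil}(A))=n-r$ and $k\ge r+1$ (a maximal torus of $H$ embeds in that of $A^{\times}$) this forces $k\ge n$. As $\dim A=n+2$, at most two factors have dimension $>1$, and being a dual pair their number is $0$ or $2$; so either (A) $A=\mathbb{K}^{n+2}$, $G=\mathbb{G}_m^{n}$, or (B) $A=B_1\times B_2\times\mathbb{K}^{n-2}$ with $B_1\cong B_2\cong\mathbb{K}[\varepsilon]/(\varepsilon^2)$ and $G=\mathbb{G}_a\times\mathbb{G}_m^{n-1}$.

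\emph{Step 3 (eliminating (A), (B) for $n\ge 3$).} In case (A), $G\subseteq A^{\times}=\mathbb{G}_m^{n+2}$ is a rank-$n$ subtorus with no scalar; the orbit of $\mathbf 1$ is a closed subtorus coset in the big torus $T_0\cong\mathbb{G}_m^{n+1}$ of $\mathbb{P}^{n+1}$ and must be dense in $Q_n\cap T_0=\{B(x,x)=0\}\cap T_0$, which forces $B(x,x)$ to be a single binomial on $T_0$, so $\mathrm{rank}\,\Phi\le 4$ — impossible for $n\ge 3$. In case (B), $H$ contains the maximal torus of $A^{\times}$, so $H=\mathbb{G}_m^{n}\cdot\exp(\mathbb{K}n_0)$ for a nilpotent $n_0\in B_1\oplus B_2$ with both components nonzero (by the adjointness of Step 2), and computing $\overline H$ shows $\widehat Q_n$ is cut out by one quadratic equation in only $4$ of the $n+2$ coordinates, again giving $\mathrm{rank}\,\Phi\le 4$ — impossible for $n\ge 3$. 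Hence for $n\ge 3$ the algebra $A$ is local, the action is additive, and Sharoiko's theorem concludes. For $n=1,2$ one argues directly: a connected commutative subgroup of $\mathrm{PGL}_2$ with an open orbit on $\mathbb{P}^1$ is $\mathbb{G}_a$ or $\mathbb{G}_m$, and on $Q_2\cong\mathbb{P}^1\times\mathbb{P}^1$ (up to the transposition in $\mathrm{Aut}(Q_2)$) it is a product of two such, yielding exactly $\mathbb{G}_a^2$, $\mathbb{G}_a\times\mathbb{G}_m$, $\mathbb{G}_m^2$ — which are pairwise inequivalent and are precisely cases (A),(B) in dimensions $1,2$.

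\emph{Main obstacle.} The difficulty lies in Steps 2--3: extracting from the one bilinear identity and nondegeneracy of $\Phi$ the rigid dual pairing of the idempotent components of $A$, and in particular ruling out higher-dimensional self-dual factors — this is exactly where one must use that the orbit closure of $\mathbf 1$ is the entire quadric cone — and then the numerology that forces $A$ into one of the two explicit families, each of which fails in dimension $\ge 3$ since its quadric has rank at most $4$ while $Q_n$ is smooth.
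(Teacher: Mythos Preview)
Your approach is genuinely different from the paper's, and considerably more elaborate. Where you carry out a full structural analysis of the algebra $A$ (duality pairing of local factors under $B$, self-dual factors forced to be $\mathbb{K}$, a count giving $k\ge n$, then rank-of-$\Phi$ bounds in the two surviving families), the paper proceeds far more directly from the single skew-symmetry identity $F(ua_1,a_2)_v+F(a_1,ua_2)_v=0$. For the mixed case $l\ge 1$, $r\ge 1$, $n\ge 3$ the paper takes any $0\ne u\in U_m$, observes $u^2\in U^{\perp}$ is semisimple and hence $u^2=\lambda\cdot 1$ with $\lambda\ne 0$, so multiplication by $u$ is an isomorphism sending $U$ (dimension $n\ge 3$) into $U^{\perp}$ (dimension $2$): contradiction. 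For the pure torus case $l=0$ it simply notes that a maximal torus in $\mathrm{SO}_{n+2}$ has rank $\lfloor (n+2)/2\rfloor<n$ for $n>2$. Uniqueness of the $\GG_a\times\GG_m$-action on $Q_2$ is obtained by writing down the unique possible multiplication and Gram tables in a basis $\{1,u,w,h\}$. Your low-dimensional endgame, using $Q_1\cong\mathbb{P}^1$ and $Q_2\cong\mathbb{P}^1\times\mathbb{P}^1$, is a pleasant alternative to that last computation.

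Two places in your argument are thinner than the rest and would need to be written out to stand on their own. First, the passage ``since $\widehat Q_n$ is not a product, this product must be all of $A$'' is doing real work: you are implicitly using that both factor-orbit-closures are irreducible and that the cone $\widehat Q_n$ does not split as a nontrivial product, hence a containment of equal-dimensional irreducible varieties forces equality; this deserves a line of justification. Second, the inference ``this forces $k\ge n$'' is asserted but not derived: the actual computation is that the adjointness relation on each dual pair makes $U\cap\mathrm{nil}(A)$ inject into $\bigoplus_{\text{pairs}}\mathfrak m_i$ (one side of each pair), giving $l\le\sum_{\text{pairs}}(\dim A_i-1)$, and combining with $\dim A=n+2$ and $k=r+1$ yields $l\le (l+1)/2$, i.e.\ $l\le 1$ and hence $k=r+1\ge n$. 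Once these are spelled out your route is sound; it trades the paper's two-line tricks for a structural picture of $(A,B)$ that explains \emph{why} the only survivors are the rank-$\le 4$ configurations.
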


To obtain this result we generalize the Hassett-Tschinkel  technique and give a correspondence between  equivalence classes of effective cyclic $\GG_a^l\times \GG_m^r$-actions on~$\PP^m$ and isomorphism classes of pairs $(A,U)$, where $A$ is an $(m+1)$-dimensional algebra splitting into direct sum of $r+1$ local ones, $U$ is an $n=(r+l)$-dimensional subspace generating $A$ as an algebra with unit and the dimension of the intersection of~$U$ with the radical~$R$ of~$A$ has dimension $l$, see Theorem~\ref{t1}.  
After this we proceed analogically to~\cite{Sh}. We lift a commutative action on $Q_n$ to a $\GG_a^l\times\GG_m^r$-action on~$\PP^{n+1}$. Then we define a scalar product on the corresponding $(n+2)$-dimensional algebra $A$. Finally, we describe all algebras with generating subspace and scalar product corresponding to commutative actions on $Q_n$.

The authors are grateful to Ivan Arzhantsev for useful discussions and comments.

\section{Preliminaries}

This section presents a collection of definitions and results from the theory of actions of commutative linear algebraic groups on projective varieties.

Let $X$ be an irreducible projective algebraic variety of dimension $n$ and $G~\simeq~\GG_a^l \times \GG_m^r$ be an $n$-dimensional commutative linear algebraic group.

\begin{definition}
A  \textit{commutative action} on $X$ is an effective action $G \times X \to X$ with an open orbit. Two commutative actions $\alpha$ and $\beta$ are said to be \textit{equivalent} if there are an automorphism $\varphi\colon G\rightarrow~G$ and an automorphism $\psi\colon X\rightarrow~X$ such that $\alpha(\varphi(g),\psi(x))=\psi(\beta(g,x)).$
\end{definition}

When $G \simeq \GG_a^n$ we obtain an \textit{additive action} on a projective variety. 

\begin{proposition} \label{p1}
{\rm \cite[Proposition 2.15]{HTsch}} There is a one-to-one correspondence
between

\begin{enumerate}
\item[(1)] equivalence classes of additive actions on $\PP^n$; 
\item[(2)] isomorphism classes of local $(n+1)$-dimensional algebras.
\end{enumerate}
\end{proposition}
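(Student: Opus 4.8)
\emph{Proof sketch.} The plan is to write down explicit constructions in both directions, check that they are mutually inverse, and verify that the equivalence relation on actions corresponds exactly to the isomorphism relation on algebras.

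\emph{From an algebra to an action.} Starting from a local commutative associative $\KK$-algebra $A$ with $\dim A=n+1$ and maximal ideal $\mathfrak{m}$ (so $\dim\mathfrak{m}=n$ and $\mathfrak{m}$ is nilpotent), I would use that in characteristic zero the map $\exp(x)=1+x+\tfrac{x^2}{2}+\cdots$ is a well-defined morphism (a finite sum) and an isomorphism of algebraic groups $(\mathfrak{m},+)\xrightarrow{\ \sim\ }(1+\mathfrak{m},\,\cdot\,)$, so that $1+\mathfrak{m}\simeq\GG_a^n$. Let $1+\mathfrak{m}$ act on $\PP(A)\simeq\PP^n$ by left multiplication. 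The orbit map $a\mapsto[a]$ has trivial fibre over $[1]$ (because $\mathfrak{m}$ contains no nonzero scalar), so the orbit of $[1]$ has dimension $n$ and is therefore open; the action is effective since an $a\in 1+\mathfrak{m}$ fixing $[1]$ lies in $\KK\cdot 1$, forcing $a=1$.

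\emph{From an action to an algebra.} Given an additive action of $\GG_a^n$ on $\PP^n$, I would first lift it to a linear representation $\rho$ on $V=\KK^{n+1}$, which is possible because $\GG_a^n$ is connected with trivial character group. Being unipotent, $\rho$ consists of commuting unipotent operators, so its differential yields pairwise commuting nilpotent operators, which together with $\mathrm{id}_V$ span a commutative associative subalgebra $A\subseteq\operatorname{End}(V)$. Picking $v_0\in V$ over a point of the open orbit, one checks that evaluation $\varepsilon\colon A\to V$, $\phi\mapsto\phi(v_0)$, is injective (a $\phi$ with $\phi(v_0)=0$ kills the whole $\GG_a^n$-orbit of $v_0$ by equivariance, hence all of $V$) and surjective (openness of the orbit forces $\KK v_0+\mathrm{Lie}(\GG_a^n)\cdot v_0=V$, and $A$ is generated by these operators). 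Hence $\dim A=n+1$; transporting the product to $V$ with unit $v_0$ gives the algebra, and $A=\KK\cdot\mathrm{id}\oplus\mathfrak{n}$ with $\mathfrak{n}$ a nilpotent ideal, so $A$ is local.

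\emph{Mutual inverseness and equivalences.} Running the first construction and then the second on the same $A$, the differential of the regular representation of $1+\mathfrak{m}$ is multiplication by $\mathfrak{m}$ and $\varepsilon$ is the identity of $A$ (since $v_0=1$), so $A$ is recovered; the other round trip is equally direct. An equivalence $(\varphi,\psi)$ of actions lifts to a linear isomorphism $V\to V'$ intertwining $\rho$ with $\rho'\circ\varphi$; after rescaling it sends $v_0$ to $v_0'$, conjugation by it maps $A$ onto $A'$, and via the evaluation maps this becomes an algebra isomorphism. Conversely an algebra isomorphism $A\to A'$ restricts to a linear isomorphism $\mathfrak{m}\to\mathfrak{m}'$, which through $\exp$ is an automorphism of $\GG_a^n$, while the isomorphism itself intertwines the two multiplication actions on projective space, producing the required $\psi$. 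I expect the two genuinely delicate points to be the lift of the action on $\PP^n$ to a linear action on $\KK^{n+1}$, and the check that the correspondence respects equivalence in both directions — that conjugation of representations, rescaling of the marked vector, and change of the group automorphism together account for \emph{exactly} the algebra isomorphisms and nothing more. The remaining steps (well-definedness of $\exp$, the evaluation isomorphism, locality of $A$) are routine by comparison.
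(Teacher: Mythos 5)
Your sketch is essentially correct and follows the same route the paper itself takes: the paper does not reprove this cited result of Hassett--Tschinkel, but its proof of Theorem~\ref{t1} (the generalization to $\GG_a^l\times\GG_m^r$) uses exactly your mechanism --- the exponential/differential passage between the group and a generating subspace, and the evaluation map $a\mapsto a\cdot v_0$ at a cyclic vector to identify the algebra with $\KK^{n+1}$. The only wording to adjust is that the commuting nilpotent operators together with $\mathrm{id}_V$ \emph{generate} (rather than a priori span) the subalgebra $A$; that their span already equals all of $A$ is a consequence of your evaluation-map dimension count, not something to assert at the outset.
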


Moreover, there is a bijection between equivalence classes of cyclic effective $\GG_a^n$-actions on $\PP^m$ and classes of equivalence of faithful cyclic rational $(m+1)$-dimensional representations. Recall that a linear representation $\rho\colon G\rightarrow \mathrm{GL}(V)$ is called {\it cyclic} if there exists a vector $v\in V$ such that the orbit $Gv$ spans $V$. We call $v$ the {\it cyclic vector} of $\rho$.

\begin{proposition}\label{hc}
{\rm \cite[Theorem 2.14]{HTsch}} There is a one-to-one correspondence between

\begin{enumerate}
\item[(1)] equivalence classes of faithful cyclic rational representations $$\rho: \mathbb{G}_{a}^{n}  \rightarrow \mathrm{GL}_{s}(\mathbb{K});$$ 
\item[(2)] isomorphism classes of pairs (A,U), where A is a local $s$-dimensional
algebra with the maximal ideal $\mathfrak{m}$ and U is an $n$-dimensional subspace of $\mathfrak{m}$
that generates A as an algebra with unit.
\end{enumerate}
\end{proposition}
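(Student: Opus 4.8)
The plan is to construct the correspondence explicitly in both directions, then check that the two constructions are mutually inverse and that each descends to the level of equivalence (respectively isomorphism) classes.

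\emph{From a representation to a pair.} Given a faithful cyclic rational representation $\rho$ on a space $V\cong\mathbb{K}^{s}$, I would differentiate to obtain pairwise commuting nilpotent operators $N_{1},\dots,N_{n}\in\operatorname{End}(V)$ with $\rho(t_{1},\dots,t_{n})=\exp(t_{1}N_{1}+\cdots+t_{n}N_{n})$; since $\mathbb{G}_{a}^{n}$ is connected unipotent and $\operatorname{char}\mathbb{K}=0$, the representation is recovered from $d\rho$ in this way. Let $A\subseteq\operatorname{End}(V)$ be the unital (hence commutative) associative subalgebra generated by $N_{1},\dots,N_{n}$. Commuting nilpotents admit a common strictly upper triangular form, so $A$ consists of upper triangular matrices with scalar diagonal; therefore $A$ is local, its maximal ideal $\mathfrak{m}$ being the set of nilpotent elements, with $A/\mathfrak{m}\cong\mathbb{K}$. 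Set $U=\langle N_{1},\dots,N_{n}\rangle\subseteq\mathfrak{m}$. Faithfulness of $\rho$ says exactly that $N_{1},\dots,N_{n}$ are linearly independent, so $\dim U=n$, and $U$ generates $A$ by construction. The only non-formal point is the identity $\dim A=s$: choosing a cyclic vector $v\in V$, the evaluation map $a\mapsto av$ is a surjection $A\to V$ by cyclicity, and it is injective because $av=0$ forces $abv=bav=0$ for all $b\in A$, so $a$ annihilates $Av=V$ and thus vanishes in $\operatorname{End}(V)$. Hence $(A,U)$ is of the required type.

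\emph{From a pair to a representation.} Given $(A,U)$ with $A$ local of dimension $s$, maximal ideal $\mathfrak{m}$, and $U\subseteq\mathfrak{m}$ an $n$-dimensional subspace generating $A$, I would take $V=A$, fix a basis $u_{1},\dots,u_{n}$ of $U$, let $N_{i}$ be multiplication by $u_{i}$ (nilpotent as $u_{i}\in\mathfrak{m}$, pairwise commuting as $A$ is commutative associative), and set $\rho(t)=\exp(\sum t_{i}N_{i})$, which is multiplication by the unit $\exp(\sum t_{i}u_{i})\in 1+\mathfrak{m}$; this is a rational homomorphism $\mathbb{G}_{a}^{n}\to\operatorname{GL}(V)=\operatorname{GL}_{s}(\mathbb{K})$. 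It is faithful because $\exp\colon\mathfrak{m}\to 1+\mathfrak{m}$ is a bijection with inverse $\log$, so $\rho(t)=\operatorname{id}$ gives $\sum t_{i}u_{i}=0$ and hence $t=0$. It is cyclic with cyclic vector $1\in A$: the linear span $W$ of the orbit is a subspace containing the polynomial curve $t\mapsto\exp(\sum t_{i}u_{i})$, hence all of its iterated partial derivatives at the origin (here $\mathbb{K}$ is infinite of characteristic zero); these derivatives are the monomials $u_{1}^{a_{1}}\cdots u_{n}^{a_{n}}$, which span $A$ because $U$ generates it, so $W=A$.

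\emph{Mutual inverseness and compatibility with equivalences.} Conjugating $\rho$ by $g\in\operatorname{GL}_{s}(\mathbb{K})$ replaces $A$ by the isomorphic algebra $gAg^{-1}$ and $U$ by its image, while precomposing $\rho$ with an automorphism of $\mathbb{G}_{a}^{n}$ amounts to a linear substitution in the $N_{i}$ and changes neither $A$ nor $U$; so equivalent representations yield isomorphic pairs. Conversely an isomorphism $(A,U)\to(A',U')$ is a linear isomorphism of underlying spaces intertwining the respective multiplication operators, hence an equivalence of the associated representations. For the round trip beginning with $\rho$, the evaluation isomorphism $A\xrightarrow{\sim}V$, $a\mapsto av$, intertwines left multiplication by $N_{i}$ on $A$ with $N_{i}$ on $V$, so the representation built from $(A,U)$ is equivalent to $\rho$. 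For the round trip beginning with $(A,U)$, the algebra attached to the multiplication representation on $A$ is $\{\text{multiplication by }a:a\in A\}$, and $a\mapsto(\text{multiplication by }a)$ is an algebra isomorphism onto it---injective exactly because $A$ has a unit---carrying $U$ to $U$.

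I expect the main difficulty to be organizational rather than conceptual: one must carefully track the two different notions of equivalence---conjugation together with an automorphism of $\mathbb{G}_{a}^{n}$ on one side, algebra isomorphism preserving the distinguished subspace on the other---so that the correspondence is genuinely well defined on classes. The only inputs that are not pure bookkeeping are the equality $\dim A=s$ extracted from cyclicity and the injectivity of $a\mapsto(\text{multiplication by }a)$ extracted from the presence of a unit; everything else is the characteristic-zero dictionary between one-parameter unipotent subgroups and nilpotent operators provided by $\exp$ and $\log$.
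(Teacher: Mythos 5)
Your proposal is correct and follows essentially the same route as the paper: the paper quotes this statement from Hassett--Tschinkel without proof, but its proof of the generalization (Theorem~\ref{t1}) uses exactly your construction --- take $U=d\rho(\mathfrak{g})$, let $A$ be the unital algebra it generates, prove $\dim A=s$ via the evaluation map $a\mapsto av$ at a cyclic vector (surjective since $\rho(G)\subseteq A$, injective since $av=0$ forces $aAv=0$), and invert via the regular representation $\exp$ of multiplication by elements of $U$. Your extra details (simultaneous strict upper-triangularization to see that $A$ is local, derivatives of the polynomial orbit curve for cyclicity, and the bookkeeping of the two equivalence relations) are all sound and only make explicit what the paper leaves implicit.
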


The generalization of Proposition \ref{p1} was obtained independently by Knop and Lange in \cite{KL}.

\begin{proposition}\label{kl}
{\rm \cite[Corollary 5.2]{KL}} There is a one-to-one correspondence
between

\begin{enumerate}
\item[(1)] equivalence classes of commutative actions of linear algebraic group $ \GG_a^l\times\GG_m^r$ on~$\PP^n$; 
\item[(2)] isomorphism classes of $(n+1)$-dimensional commutative
$\mathbb{K}$-algebras with exactly $r+1$ maximal ideals.
\end{enumerate}

\end{proposition}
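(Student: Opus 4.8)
The plan is to realize the correspondence through the \emph{linear span of the lifted orbit}, which will carry a natural commutative algebra structure. Given a commutative action of $G=\GG_a^l\times\GG_m^r$ on $\PP^n$ with open orbit, I would first lift it to a linear representation $\rho\colon G\to\mathrm{GL}(V)$ on $V=\KK^{n+1}$. Such a lift exists because the preimage $\widetilde G$ of $\rho(G)\subseteq\mathrm{PGL}(V)$ in $\mathrm{GL}(V)$ is a central extension of $G$ by $\GG_m$, and since $G$ is a product of copies of $\GG_a$ and $\GG_m$ it has trivial Picard group, so every such central extension splits; composing the splitting with $\widetilde G\hookrightarrow\mathrm{GL}(V)$ gives $\rho$, which is faithful since the action is effective. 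The open orbit condition says there is a point $[v]$ with open $G$-orbit, equivalently that $\rho$ is cyclic with cyclic vector $v$, i.e. $\mathrm{span}(Gv)=V$; moreover $\rho(G)\cap\KK^{*}\!\cdot\mathrm{id}=\{\mathrm{id}\}$, again by effectiveness.

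Next I would define $A\subseteq\mathrm{End}(V)$ to be the unital subalgebra generated by $\rho(G)$. Since $G$ is commutative, $A$ is commutative and associative, and each element of $\rho(G)$ is invertible with inverse in $A$, so $\rho(G)\subseteq A^{*}$. Evaluation at the cyclic vector, $\mathrm{ev}_v\colon A\to V$, $b\mapsto bv$, is surjective because $Gv$ spans $V$, and injective because $bv=0$ together with commutativity forces $b(b'v)=b'(bv)=0$ for all $b'\in A$, hence $b=0$ on $V=Av$. Thus $\mathrm{ev}_v$ is an isomorphism, $\dim A=n+1$, and $v$ corresponds to the unit $1=\mathrm{id}$. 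This is the algebra attached to the action; when $r=0$ it recovers the local algebra of Propositions~\ref{p1} and~\ref{hc}.

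The crux is then to show that $A$ has exactly $r+1$ maximal ideals. Writing $A=\prod_{i=1}^{s}A_i$ as a product of local algebras (one per maximal ideal, each with residue field $\KK$), one computes $A^{*}\cong\GG_m^{\,s}\times\GG_a^{\,\dim R}$, where $R$ is the radical, the torus coming from $(A/R)^{*}\cong(\KK^{\times})^{s}$ and the unipotent part from $1+R\cong(R,+)$ via the exponential. On the other hand $\rho(G)\cdot\KK^{*}$ is a connected subgroup of $A^{*}$; since $A^{*}\cdot v=A^{*}$ is the open orbit of the invertible elements and $\rho(G)\cdot\KK^{*}$ meets $\KK^{*}\!\cdot\mathrm{id}$ only in $\mathrm{id}$, a dimension count gives $\dim(\rho(G)\cdot\KK^{*})=\dim G+1=n+1=\dim A^{*}$, whence $\rho(G)\cdot\KK^{*}=A^{*}$. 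Comparing torus and unipotent parts in the resulting isomorphism $\GG_a^{\,l}\times\GG_m^{\,r+1}\cong\GG_m^{\,s}\times\GG_a^{\,\dim R}$ and invoking uniqueness of the decomposition of a commutative linear algebraic group into a torus and a unipotent part yields $s=r+1$ (and $\dim R=l$).

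Conversely, from an $(n+1)$-dimensional commutative algebra $A$ with exactly $r+1$ maximal ideals I would recover the action directly: $A^{*}\cong\GG_m^{\,r+1}\times\GG_a^{\,l}$ acts on $\PP(A)=\PP^n$ by multiplication, the scalars $\KK^{*}$ act trivially, and the induced effective action of $A^{*}/\KK^{*}\cong\GG_a^{\,l}\times\GG_m^{\,r}$ has open orbit $[1]$, since $A^{*}\cdot1=A^{*}$ is dense in $A$. Finally I would check that the two constructions are mutually inverse and match isomorphic algebras with equivalent actions and conversely. I expect the \textbf{main obstacle} to be precisely the counting step, namely establishing $\rho(G)\cdot\KK^{*}=A^{*}$ and extracting $s=r+1$ from the structure of $A^{*}$, together with verifying that the lift $\rho$ is unique up to conjugation and character twist so that the isomorphism class of $A$ is well defined.
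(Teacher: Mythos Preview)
The paper does not prove this proposition; it is quoted from \cite{KL} without argument. Your sketch is therefore not comparable to any proof in the paper, but it is essentially correct and in fact dovetails with the paper's own Theorem~\ref{t1}, which establishes the closely related representation-theoretic statement by a parallel route.

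Both your argument and the proof of Theorem~\ref{t1} build $A$ as the unital commutative subalgebra of $\mathrm{End}(V)$ generated by the image of $G$ (you use $\rho(G)$, the paper uses $U=d\rho(\mathfrak{g})$; these generate the same algebra since $\exp(u)$ is a polynomial in $u$), and both identify $A$ with $V$ via evaluation at a cyclic vector. The difference lies in how the multiplicative part is recorded. The paper keeps track of the generating subspace $U$ and the condition $\dim(U\cap R)=l$, because that is what feeds into the later analysis of the induced scalar product in Lemma~\ref{l3}. You instead bypass $U$ and count maximal ideals directly by proving $\rho(G)\cdot\KK^{*}=A^{*}$ and comparing torus ranks in $A^{*}\cong\GG_m^{\,r+1}\times\GG_a^{\,l}$ versus $A^{*}\cong\GG_m^{\,s}\times\GG_a^{\,\dim R}$. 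Your route is cleaner for the bare statement of Proposition~\ref{kl}; the paper's formulation with $U$ is what is actually needed downstream.

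One small slip to fix: you write that ``$\rho(G)\cdot\KK^{*}$ meets $\KK^{*}\cdot\mathrm{id}$ only in $\mathrm{id}$'', but of course $\KK^{*}\cdot\mathrm{id}\subseteq\rho(G)\cdot\KK^{*}$. What you need, and what effectiveness actually gives, is $\rho(G)\cap\KK^{*}\cdot\mathrm{id}=\{\mathrm{id}\}$; this is precisely what makes the dimension count $\dim(\rho(G)\cdot\KK^{*})=\dim G+1=n+1=\dim A^{*}$ valid, after which connectedness of $A^{*}$ finishes the identification.
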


Commutative actions on projective subvarieties $X \subseteq \PP^m$ induced by an action $G \times \PP^m \to \PP^m$ can be described in terms of $(m+1)$-dimensional commutative algebras equipped with some additional data. This approach was used by Sharoiko in \cite{Sh} to classify additive actions on smooth projective quadrics.

\begin{proposition}\label{shh}
{\rm \cite[Theorem 4]{Sh}} A smooth quadric $Q_n \subseteq \PP^{n+1}$ admits a unique additive action up to equivalence.
\end{proposition}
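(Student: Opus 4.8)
The plan is to realize the additive action inside the linear automorphisms of the ambient space, translate it into the algebraic data of Proposition~\ref{hc} equipped with a bilinear form, and classify that data. Since $Q_n$ is smooth, $\mathrm{Aut}(Q_n)$ acts linearly on $\PP^{n+1}$, and because $\GG_a^n$ is unipotent with no nontrivial characters the additive action lifts to a faithful representation $\rho\colon\GG_a^n\to\mathrm{GL}(V)$, $V=\KK^{n+2}$, preserving the quadratic form $q$ cutting out $Q_n$. A smooth quadric is nondegenerate, so its open orbit spans $\PP^{n+1}$; thus $\rho$ is cyclic and a vector representing a point of the open orbit is a cyclic vector lying on $\{q=0\}$. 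By Proposition~\ref{hc} we obtain a pair $(A,U)$, with $A$ local commutative $(n+2)$-dimensional, $U\subseteq\mathfrak m$ an $n$-dimensional subspace generating $A$, the unit $1$ the cyclic vector, and $\GG_a^n\cong\exp(U)$ acting on $V=A$ by multiplication. Polarizing $q$ gives a nondegenerate symmetric bilinear form $F$ on $A$, and differentiating the invariance $q(\exp(u)a)=q(a)$ yields the compatibility relation $F(ua,b)+F(a,ub)=0$ for all $u\in U$, $a,b\in A$, while the cyclic vector lying on $Q_n$ gives $F(1,1)=0$. (Conversely these two conditions force $\exp(U)\subseteq Q_n$, since $\frac{d}{dt}F(\exp(tu),\exp(tu))=2F(u\exp(tu),\exp(tu))=0$.) Two additive actions on $Q_n$ are equivalent precisely when the triples $(A,U,F)$ are isomorphic up to rescaling $F$, so the statement amounts to uniqueness of such a triple. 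Existence is immediate: $q=x_0x_{n+1}-(x_1^2+\dots+x_n^2)$ carries the action $x_0\mapsto x_0$, $x_i\mapsto x_i+t_ix_0$, $x_{n+1}\mapsto x_{n+1}+2\sum t_ix_i+(\sum t_i^2)x_0$, whose open orbit is the affine chart $x_0\neq0$.

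Next I would extract the structural facts needed for the classification. The subspace $U$ generates $\mathfrak m$ as an ideal; more precisely $\mathfrak m=\sum_{k\ge1}U^{(k)}$, where $U^{(k)}$ denotes the span of the $k$-fold products of elements of $U$. Using $F(ua,b)=-F(a,ub)$ together with $ub\in\mathfrak m$ one checks that $\mathrm{soc}(A)=\{a:\mathfrak m a=0\}$ coincides with $\mathfrak m^{\perp}=\{a:F(a,\mathfrak m)=0\}$, so $\dim\mathrm{soc}(A)=\dim A-\dim\mathfrak m=1$: the algebra is Gorenstein and $\mathrm{soc}(A)=\mathfrak m^{N}$, $N$ the socle degree. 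Setting $a=b=1$, $v=u$ in the relation and using symmetry gives $F(U,1)=0$, so the hyperplane $\mathrm{Ann}_F(1)$ equals $\KK1\oplus U$; in particular $\mathrm{soc}(A)\not\subseteq U$, since a nonzero socle element lying in $U$ would be $F$-orthogonal to both $\KK1$ and $\mathfrak m$, hence zero. Finally, iterating the compatibility relation gives $F(za,b)=(-1)^{k}F(a,zb)$ for every $z\in U^{(k)}$.

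The crux is to show $\mathfrak m^{3}=0$, and the lever is exactly the sign $(-1)^{k}$ above. Suppose $\mathfrak m^{3}\neq0$, so $N\ge3$. For $z\in U\cap\mathfrak m^{2}$ write $z=z_{2}+\dots+z_{N}$ with $z_{k}\in U^{(k)}$. Comparing $F(za,b)=-F(a,zb)$ (valid because $z\in U$) with $F(za,b)=\sum_k(-1)^{k}F(a,z_{k}b)$ gives $F(a,\tilde z b)=0$ for all $a,b$, where $\tilde z=\sum_{k\ \mathrm{even}}z_{k}$; by nondegeneracy $\tilde z=0$, so $z\in\mathfrak m^{3}$. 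Hence $U\cap\mathfrak m^{2}\subseteq\mathfrak m^{3}$. Since $\dim(U\cap\mathfrak m^{2})=\dim\mathfrak m^{2}-1$, this forces $\dim(\mathfrak m^{2}/\mathfrak m^{3})\le1$, and it is $\ge1$ by Nakayama, so $\dim(\mathfrak m^{2}/\mathfrak m^{3})=1$; then $U\cap\mathfrak m^{3}=U\cap\mathfrak m^{2}$ has dimension $\dim\mathfrak m^{2}-1=\dim\mathfrak m^{3}$, whence $\mathfrak m^{3}\subseteq U$ and therefore $\mathrm{soc}(A)=\mathfrak m^{N}\subseteq U$, contradicting the previous paragraph. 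So $\mathfrak m^{3}=0$. Now the same computation with $z=z_{2}$ shows $U\cap\mathfrak m^{2}=0$, so $\dim(\mathfrak m/\mathfrak m^{2})=n$ and $\dim\mathfrak m^{2}=1$; choosing a basis $u_{1},\dots,u_{n}$ of $U$ and a generator $w$ of $\mathfrak m^{2}$ we get $u_{i}u_{j}=c_{ij}w$ for a symmetric matrix $(c_{ij})$ and $u_{i}w=w^{2}=0$. The radical of the quadratic form $\sum c_{ij}x_{i}x_{j}$ on $U$ lands in $\mathrm{soc}(A)\cap U=0$, so $(c_{ij})$ is nondegenerate, hence over $\KK$ we may normalize $u_{i}u_{j}=\delta_{ij}w$; this pins down $(A,U)$. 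A direct computation from the compatibility relation then forces $F(1,1)=F(1,u_{i})=F(u_{i},u_{j})|_{i\neq j}=F(u_{i},w)=F(w,w)=0$ and $F(u_{i},u_{i})=-F(1,w)$, with $F(1,w)\neq0$ by nondegeneracy; rescaling $F$ yields a unique triple, hence a unique additive action up to equivalence.

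The step I expect to be the genuine obstacle is $\mathfrak m^{3}=0$: one must notice that the $\pm1$ in the compatibility relation — which is multiplicative in the number of factors taken from $U$ — is precisely what prevents $U$ from meeting $\mathfrak m^{2}$ unless the algebra is very short, and then feed this into the Gorenstein constraints ($\mathrm{soc}(A)$ one-dimensional and not contained in $U$) through a dimension count. The remaining steps are essentially bookkeeping once one commits to the $(A,U,F)$ formalism and the correspondence of Proposition~\ref{hc}.
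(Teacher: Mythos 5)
The paper itself offers no proof of this proposition --- it is imported from \cite[Theorem~4]{Sh} --- but the machinery it builds around it (Theorem~\ref{t1}, Lemma~\ref{cc}, Lemma~\ref{l3}) is exactly the $(A,U,F)$ framework you set up, so your reduction to classifying triples is the intended one, and I have checked that your execution is correct. Where you genuinely diverge is at the crux, the proof that $\mathfrak m^3=0$. Sharoiko's route (mirrored by Lemma~\ref{l3} here) works with $U^\perp$: from $F(1,U)=0$, $U\cdot U\subseteq U^\perp$ and $\dim U^\perp=2$ one bounds $\dim\mathfrak m^2$ and then reads off the multiplication table. You instead observe that $\mathrm{soc}(A)=\mathfrak m^\perp$ (so $A$ is Gorenstein with $\mathrm{soc}(A)\cap U=0$) and exploit the parity of the sign in $F(za,b)=(-1)^kF(a,zb)$ for $z\in U^{(k)}$ to get $U\cap\mathfrak m^2\subseteq\mathfrak m^3$; combined with $U+\mathfrak m^2=\mathfrak m$ (generation) this forces $\dim(\mathfrak m^2/\mathfrak m^3)=1$ and $\mathfrak m^3\subseteq U$, contradicting $\mathrm{soc}(A)\not\subseteq U$ unless $\mathfrak m^3=0$. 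This is a clean alternative: the Gorenstein observation explains structurally why the algebra must be so short, whereas the $U^\perp$ count is more hands-on; the two are of comparable length. Two small points you should make explicit: the equality $\dim(U\cap\mathfrak m^2)=\dim\mathfrak m^2-1$ rests on $U+\mathfrak m^2=\mathfrak m$ together with $\dim U=\dim\mathfrak m-1$ (which also shows $\mathfrak m^2\neq 0$), and the decomposition $z=z_2+\dots+z_N$ with $z_k\in U^{(k)}$ is not unique --- your argument only needs that some such decomposition exists and that the resulting $z-\tilde z$ lies in $\sum_{k\ge 3}U^{(k)}=\mathfrak m^3$, which it does. The endgame (normalizing $u_iu_j=\delta_{ij}w$ via nondegeneracy of $(c_{ij})$, then pinning down $F$ up to a scalar) matches the standard computation and is correct.
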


\section{A generalization of Hassett-Tschinkel correspondence}

From now on let $G \simeq \mathbb{G}_{a}^{l} \times \mathbb{G}_{m}^{r}$ be a commutative linear algebraic group and~$A$ be a finite-dimensional commutative associative algebra with unit. Let $R$ be its radical, i.e. $R = \{a \in A \,| \ \exists\, n \in \mathbb{N}: a^{n}=0\}$.

\begin{definition}
Let $\rho$ and $\rho'$ be two representations of $G$, $V$ and $V'$ be corresponding spaces of representations. Then $\rho$ and $\rho'$ are called {\it equivalent} if there exists an isomorphism $\phi: V \to V'$ such that $\rho'(g) \circ \phi=\phi \circ \rho(g) \ \ \forall g \in G.$
\end{definition}

The following theorem is a generalization of Proposition~\ref{hc} to the case of commutative but not necessary additive group.

\begin{theorem} \label{t1} 
There is a bijection between

\begin{enumerate}
\item[(1)] equivalence classes of faithful cyclic rational representations $$\rho: \mathbb{G}_{a}^{l} \times \mathbb{G}_{m}^{r} \rightarrow \mathrm{GL}_s(\mathbb{K});$$ 
\item[(2)] isomorphism classes of pairs (A,U), where A is an $s$-dimensional
algebra and U is an $(l+r)$-dimensional subspace of A
that generates A as an algebra with unit such that $\mathrm{dim}(U \cap R) = l$.
\end{enumerate}

\end{theorem}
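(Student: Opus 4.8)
The plan is to mimic the Hassett--Tschinkel construction for $\GG_a^n$ while carefully tracking how the torus part $\GG_m^r$ forces the algebra to split into a direct sum of local pieces. Starting from a faithful cyclic rational representation $\rho\colon G \to \mathrm{GL}_s(\KK)$ with cyclic vector $v$, I would first decompose the representation space $V$ into weight spaces for the torus $\GG_m^r$. Since $\GG_a^l$ commutes with $\GG_m^r$, each weight space is $\GG_a^l$-invariant, and on each weight space the $\GG_a^l$-action is unipotent, so $\rho(\GG_a^l)$ is spanned by commuting nilpotent operators while $\rho(\GG_m^r)$ is diagonalizable; together they generate a commutative associative subalgebra $A := \KK[\rho(G)] \subseteq \mathrm{End}(V)$. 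The cyclicity of $v$ gives that the evaluation map $A \to V$, $a \mapsto a\cdot v$, is surjective; faithfulness will give injectivity, so $\dim A = s$ and $V \cong A$ as an $A$-module. The multiplicative set of idempotents coming from the projections onto the torus weight spaces (more precisely, onto the sum of weight spaces with a fixed image under the character lattice identification) decomposes $A = A_0 \oplus \cdots \oplus A_r$ into local algebras, with the number of factors being $r+1$ because the torus has rank $r$ and acts faithfully, so its image in the idempotent structure has exactly $r+1$ "components" on the cyclic module — this is the step where I expect to need the most care, matching the rank $r$ to the count $r+1$ and ruling off degeneracies.

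Next I would produce the generating subspace $U$. Differentiating $\rho$ at the identity gives the Lie algebra $\mathfrak{g} = \mathrm{Lie}(G)$ acting on $V$; composing with evaluation at $v$ embeds $\mathfrak{g}$ into $A$ (using faithfulness), and I set $U := d\rho(\mathfrak{g}) \subseteq A$. The additive directions $\mathrm{Lie}(\GG_a^l)$ map to nilpotent elements, i.e. into the radical $R$ of $A$, contributing an $l$-dimensional subspace of $U\cap R$; the multiplicative directions $\mathrm{Lie}(\GG_m^r)$ map to elements whose exponentials are the torus, and one checks these are linearly independent modulo $R$ and independent from the additive part, giving $\dim U = l+r$ and $\dim(U\cap R) = l$ exactly. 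That $U$ generates $A$ as a unital algebra follows because $\rho(G)$ is generated (as an algebraic group, hence its linear span as an algebra is generated) by $\exp$ of the nilpotent generators together with the diagonalizable torus, and the diagonalizable part is a polynomial expression in $d\rho$ of the $\GG_m$-directions by the usual "$t^{\partial}$" trick in characteristic zero.

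For the reverse direction, given $(A,U)$ with $U = U_R \oplus U_s$ where $U_R = U\cap R$ has dimension $l$ and $U_s$ is an $l$-complement of dimension... rather, a complement of dimension $r$ mapping isomorphically to a subspace of $A/R \cong \KK^{r+1}$, I would define $\rho$ on $V := A$ by letting $\GG_a^l$ act via $u \mapsto \exp(\mathrm{ad}$-free multiplication$)$, namely $g = (t_1,\dots,t_l) \mapsto \exp(t_1 e_1 + \cdots)$ where $e_i$ is a basis of $U_R$ (well-defined since $U_R$ is nilpotent), and $\GG_m^r$ act through the semisimple part: choose a basis of $U_s$ whose images generate $A/R$, use the idempotent decomposition to write each as a combination of idempotents, and define the torus action diagonally on each $A_i$. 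One verifies this is a faithful rational representation, that $1 \in A$ is a cyclic vector because $U$ generates $A$, and that the two constructions are mutually inverse on isomorphism/equivalence classes. The main obstacle, as noted, is the bookkeeping that makes "$r$ torus factors" correspond to "$r+1$ local summands" and that keeps the dimension count $\dim(U\cap R)=l$ on the nose rather than merely $\geq l$; handling the case where $U_s$ meets $R$ nontrivially a priori, and showing the generation hypothesis forces it not to, is the crux.
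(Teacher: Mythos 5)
Your overall strategy coincides with the paper's: in the forward direction set $U=d\rho(\mathfrak{g})$, let $A$ be the unital subalgebra of $\mathrm{Mat}_{s\times s}(\KK)$ generated by $U$, and use evaluation at the cyclic vector to identify $A$ with $\KK^s$; in the backward direction use the regular representation of $A$ on itself and exponentiate $U$. However, the structural claim you organize the argument around --- that $A$ decomposes into exactly $r+1$ local summands --- is false, and for good reason it is not part of the statement you are proving. The number of local factors of $A$ equals the number of distinct weights of $\GG_m^r$ on $\KK^s$, which faithfulness only constrains to generate the character lattice $\mathbb{Z}^r$. For example, $t\mapsto\mathrm{diag}(t,t^2,t^3)$ is a faithful cyclic rational representation of $\GG_m$ with $A\cong\KK^3$ (three maximal ideals, while $r+1=2$), and $(t_1,t_2)\mapsto\mathrm{diag}(t_1,t_2)$ gives $A\cong\KK^2$ (two maximal ideals, while $r+1=3$). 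The count ``$r+1$'' belongs to the Knop--Lange setting of actions on $\PP^n$ with open orbit, where the extra one comes from the scalars; trying to prove it here would sink the argument. Fortunately nothing in the bijection needs it: one only records $\dim U=l+r$ and $\dim(U\cap R)=l$, which follow from $d\rho(\mathfrak{g})=d\rho(\mathfrak{g}_a)\oplus d\rho(\mathfrak{g}_m)$ with the first summand nilpotent and the second semisimple.

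The second problem is in the converse direction. You correctly isolate the crux --- whether a complement $U_s$ of $U\cap R$ in $U$ consists of semisimple elements --- but your proposed resolution, that the generation hypothesis forces it, is not true. Take $A=\KK\times\KK[x]/(x^2)$ and $U=\langle(1,x)\rangle$: then $U$ together with $1$ generates $A$ (one gets $(1,x)^2=(1,0)$, hence $(0,1)$ and $(0,x)$), and $\dim(U\cap R)=0$, yet $(1,x)$ has nonzero nilpotent part $(0,x)$. Since the Jordan decomposition is intrinsic to $A$ and $d\rho(\mathfrak{g}_m)$ always consists of semisimple elements, this pair cannot arise from any $\GG_m$-representation, and $\{\exp(t(1,x))\}=\{(e^t,1+tx)\}$ is not a one-dimensional algebraic subgroup. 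So $\dim(U\cap R)=l$ alone does not make the inverse construction work; one needs $U=(U\cap R)\oplus(U\cap A_d)$ with $A_d$ the span of the primitive idempotents, as in Remark~\ref{rr}. (The paper's own proof silently assumes this when asserting $\{\exp(a)\mid a\in U\}\cong\GG_a^l\times\GG_m^r$, so you are in good company, but a correct write-up must either add this hypothesis to item (2) or prove it is implied --- and generation does not imply it.) Your fallback of redefining the torus to act ``diagonally on each $A_i$'' does not repair this: it replaces $d\rho(\mathfrak{g}_m)$ by a span of idempotents, so the round trip no longer returns the subspace $U$ you started with.
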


\begin{proof}

Let  $\rho: \mathbb{G}_{a}^{l} \times \mathbb{G}_{m}^{r} \rightarrow \mathrm{GL}_s(\mathbb{K})$ be a faithful cyclic rational representation. The differential
defines a representation $d\rho: \mathfrak{g} \rightarrow \mathfrak{gl}_s(\mathbb{K})$ of the tangent algebra $\mathfrak{g}=\mathrm{Lie}(G).$  Let us denote 
$U=d\rho(\mathfrak{g})$. Consider the subalgebra with unit $$A\subseteq \mathrm{Mat}_{s\times s}(\mathbb{K})$$ generated by $U$.
Let us prove that the algebra $A$ is isomorphic as a vector space to~$\mathbb{K}^{s}.$
Indeed, let us fix a cyclic vector $v$ for $\rho$ and consider the map 
$$\xi: A \rightarrow \mathbb{K}^{s}, \ \ a \rightarrow a \cdot v.$$ 
Every element of  $\rho(G)$ is the exponent of an element of $U$. But exponent of a matrix can be realized as a polynomial in this matrix. So $ \rho(G) \subseteq A$. Then $$\mathbb{K}^{s}=\langle\xi(G)\rangle\subseteq \xi(A)\Rightarrow \xi(A)=\mathbb{K}^{s}.$$ 
Therefore, $\xi$ is a surjection and $\mathrm{dim}(A)\geq s.$ Moreover, we have implications
$$a \in \mathrm{Ker}(\xi) \Rightarrow a \cdot v=0 \Rightarrow Aav=0 \Rightarrow aAv=0 \Rightarrow a\mathbb{K}^{s}=0 \Rightarrow a=0.$$ 
Thus $\xi$ is an isomorphism and $\mathrm{dim}(A)=s.$

The subspace $U$ generates $A$ as an algebra with unit. Since $G=\mathbb{G}_{a}^{l} \times \mathbb{G}_{m}^{r}$, we obtain
$\mathfrak{g}=\mathfrak{g}_{a} \oplus \mathfrak{g}_{m}$ and $d\rho(\mathfrak{g})=d\rho(\mathfrak{g}_{a}) \oplus d\rho(\mathfrak{g}_{m})$, where $d\rho(\mathfrak{g}_{a})$ and $d\rho(\mathfrak{g}_{m})$ are nilpotent and diagonalizable matrices, respectively. Therefore, $\mathrm{dim}(U)=l+r=n$ and 
$$\dim(U \cap R) = l.$$
Conversely, let $A$ be an $s$-dimensional algebra and $U$ be a subspace that generates~$A$ as an algebra with unit. The left action of~$A$ on itself induces an embedding of $A$ into the space of operators on $A$
$$\tau: A \subseteq L(A).$$
Since  $U \subseteq A$, $\mathrm{dim}(U)=l+r=n$ and $\mathrm{dim}(U \cap R)=l$, the group $G =~\{\mathrm{exp}(a) | a \in~U \}$ is isomorphic to $\mathbb{G}_{a}^{l} \times \mathbb{G}_{m}^{r}$ and we obtain a representation
$$\rho: G \rightarrow \mathrm{GL}(A).$$
The subspace $\langle G 
\cdot 1 \rangle$ is $G$-invariant, so it is $U$-invariant, and since $U$ generates $A$ as an algebra, it is $A$-invariant as well. As a result, we obtain $A=A \cdot 1 \subseteq \langle G \cdot 1 \rangle$. Moreover, if $g \in \mathrm{Ker}(\rho)$ then $g \cdot 1=1$, thus $g=1$ and we get that representation is cyclic and faithful.

One can check that these correspondences between representations and algebras are inverse to each other and two algebras with generating subspaces are isomorphic if and only if corresponding representations are equivalent.
\end{proof}

\begin{remark}\label{rr}
The algebra $A\subseteq\mathrm{Mat}_{s\times s}(\KK)$ can be decomposed into a direct sum of $R$ and $A_d$, where $A_d$ is the subalgebra of diagonalizable elements. We have
$$
U=U_a\oplus U_m, \text{where}\  U_a=R\cap U=\mathrm{d}\rho(\mathfrak{g}_a), \ U_m=A_d\cap U=\mathrm{d}\rho(\mathfrak{g}_m).
$$
\end{remark}

As we see in the proof of the previous theorem, if we fix a cyclic vector $v$ in $\KK^s$, we obtain an isomorphism $\xi\colon A\rightarrow \KK^s$, $\xi(1)=v$. If we have a scalar product $(\cdot,\cdot)$ on $\KK^s$, the isomorphism $\xi$ induces a scalar product $(\cdot,\cdot)_v$ on $A$ via 
$$(a,a')_v=(\xi(a),\xi(a')).$$ 

\begin{lemma}\label{cc}
Under the assumptions of Theorem~\ref{t1}, if the representation $\rho$ is orthogonal with respect to some scalar product $(\cdot,\cdot)$, then for induced scalar product~$(\cdot,\cdot)_v$ on~$A$ we have 
$$
(ua_{1},a_{2})_v+(a_{1},ua_{2})_v=0 \ \ \forall u \in U\ \  \forall a_{1}, a_{2} \in A,
$$
and
$$
(1,1)_v=(v,v).
$$
\end{lemma}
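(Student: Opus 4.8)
The strategy is that orthogonality of $\rho$ is a condition at the group level which, after passing to tangent algebras, turns into an infinitesimal condition on $U$, and this condition is then transported from $\mathbb{K}^s$ to $A$ along the isomorphism $\xi$. The second identity is immediate: by the definition of the induced form, $(1,1)_v=(\xi(1),\xi(1))$, and $\xi(1)=1\cdot v=v$, so $(1,1)_v=(v,v)$.

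For the first identity, recall that orthogonality of $\rho$ means $(\rho(g)x,\rho(g)y)=(x,y)$ for all $g\in G$ and $x,y\in\mathbb{K}^s$; equivalently, $\rho$ factors as a homomorphism of algebraic groups through the orthogonal group $O=O(\mathbb{K}^s,(\cdot,\cdot))$, which is a closed subgroup of $\mathrm{GL}_s(\mathbb{K})$. Applying the $\mathrm{Lie}$ functor, $U=\mathrm{d}\rho(\mathfrak{g})\subseteq\mathrm{Lie}(O)$ inside $\mathrm{Mat}_{s\times s}(\mathbb{K})$, and $\mathrm{Lie}(O)$ is precisely the space of operators that are skew-symmetric for $(\cdot,\cdot)$, i.e.
$$
(ux,y)+(x,uy)=0\qquad\forall\,u\in U\ \ \forall\,x,y\in\mathbb{K}^s.
$$
(Concretely, one may instead differentiate $(\exp(tu)x,\exp(tu)y)=(x,y)$ at $t=0$, using that the curves $t\mapsto\exp(tu)$, $u\in U$, lie in $\rho(G)$ by the description of $\rho(G)$ in the proof of Theorem~\ref{t1}.)

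It then remains to substitute. Since $U\subseteq A\subseteq\mathrm{Mat}_{s\times s}(\mathbb{K})$ and $\xi(a)=a\cdot v$, associativity of the matrix action on $\mathbb{K}^s$ gives $\xi(ua)=(ua)\cdot v=u\cdot(a\cdot v)=u\cdot\xi(a)$ for all $u\in U$, $a\in A$. Taking $x=\xi(a_1)$ and $y=\xi(a_2)$ in the skew-symmetry relation above and unwinding the definition of $(\cdot,\cdot)_v$ yields
$$
(ua_1,a_2)_v+(a_1,ua_2)_v=(u\xi(a_1),\xi(a_2))+(\xi(a_1),u\xi(a_2))=0,
$$
as required.

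I do not anticipate a real obstacle. The two points that deserve a moment's care are the passage from group-level orthogonality to Lie-algebra-level skew-symmetry of $U$ (functoriality of $\mathrm{Lie}$ applied to the closed immersion $O\hookrightarrow\mathrm{GL}_s(\mathbb{K})$, or the elementary differentiation), and the identity $\xi(ua)=u\,\xi(a)$, which holds exactly because both the product on $A$ and the map $\xi$ are induced by the action of operators on $\mathbb{K}^s$.
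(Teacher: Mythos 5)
Your proof is correct and follows essentially the same route as the paper: the second identity is read off from $\xi(1)=v$, and the first is obtained by differentiating the group-level orthogonality relation to get skew-symmetry of $U$ on $\mathbb{K}^s$ and then transporting it to $A$ via the equivariance $\xi(ua)=u\,\xi(a)$. The paper phrases the differentiation in terms of curves $\rho(g(t))$ rather than $\mathrm{Lie}(O)$ or $\exp(tu)$, but this is the same computation.
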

\begin{proof}
The isomorphism $\xi\colon A\rightarrow \KK^s$  is $\rho(G)$-equivariant. Indeed, 
$$\xi(\rho(g)a)=\rho(g)a\cdot v=\rho(g)\cdot\xi(a).$$ The representation $\rho$ is orthogonal, this means 
$$(v_1,v_2)=(\rho(g)\cdot v_1, \rho(g)\cdot v_2)$$
for all $v_1, v_2\in \KK^s$, $g\in G$.
Since $U=\mathrm{d}\rho(\mathfrak{g})$, every $u\in U$ can be obtained as the tangent vector to some curve $\rho(g(t))$ at the point $\rho(g(0))=E$. Applying differential to the previous equality gives 
$$
(ua_{1},a_{2})_v+(a_{1},ua_{2})_v=0 \ \ \forall u \in U\ \  \forall a_{1}, a_{2} \in A.
$$

Since $\xi(1)=v$, we have $(1,1)_v=(\xi(1),\xi(1))=(v,v)=0$.
\end{proof}

\section{Main results}

Let $Q_n\subseteq \PP^{n+1}$ be a smooth quadric. We can assume that $Q_n$ is given by the equation $x_0x_{n+1}=x_1^2+\ldots+x_n^2$, where $[x_0:\ldots :x_{n+1}]$ are homogeneous coordinates on $\PP^{n+1}$. It is easy to see that the following formula gives an additive action on  $Q_n$, see \cite{Sh}: 
\begin{multline}\label{act}
(s_1,\ldots,s_n)\cdot[x_0:\ldots :x_{n+1}]=\\
=\left[x_0+\sum_{i=1}^n(2x_is_i+x_{n+1}s_i^2):x_1+s_1x_{n+1}:\ldots:x_n+s_nx_{n+1}:x_{n+1}\right],
\end{multline}
where $(s_1,\ldots,s_n) \in \GG_a^n.$

When $n=2$ we can define a $\GG_a\times\GG_m$-action on $Q_2$ with an open orbit. To describe it we note that $Q_2$ can be considered as the set of degenerate $2\times 2$-matrices:
$Q_2=\left\{
\begin{pmatrix}
x_0& x_1\\
x_2& x_3
\end{pmatrix}
\mid 
x_0x_3=x_1x_2
\right\}.$
Consider the following $\GG_a\times\GG_m$-action on $Q_2$:
\begin{equation}\label{pc}
(s,t)\cdot \begin{pmatrix}
x_0& x_1\\
x_2& x_3
\end{pmatrix}=
\begin{pmatrix}
1& s\\
0& 1
\end{pmatrix}
\begin{pmatrix}
x_0& x_1\\
x_2& x_3
\end{pmatrix}
\begin{pmatrix}
t& 0\\
0& 1
\end{pmatrix},\ (s,t) \in \GG_a\times\GG_m.
\end{equation}

Also there are $\GG_m^n$-actions on $Q_n$ with an open orbit for $n=1, 2$ given by
\begin{equation}\label{odnom}
t\cdot (x_0:x_1:x_2)=(tx_0:t^{-1}x_1:x_2), \ \text{where $Q_1$ is given by } x_0x_1=x_2^2,\ t \in \GG_m;
\end{equation}
\begin{multline}\label{dvum}
(t_1,t_2)\cdot (x_0: x_1: x_2: x_3)=(t_1x_0: t_1^{-1}x_1:t_2x_2: t_2^{-1}x_3),\\ \text{where $Q_2$ is given by } x_0x_1=x_2x_3,\ (t_1,t_2) \in \GG_m^2.
\end{multline}

The main result of this paper is the following theorem.
\begin{theorem}\label{maint}
Let $Q_n\subseteq \PP^{n+1}$ be a smooth quadric. 

\begin{enumerate}
\item[(i)] If $n=1$, then there are two different commutative actions  on  $Q_1$ up to equivalence. The first one is the additive action given by (\ref{act}). The second one is the multiplicative action given by (\ref{odnom}).

\item[(ii)] If $n=2$, then there are three different commutative actions  on  $Q_2$ up to equivalence. The first one is the additive action given by (\ref{act}), the second one is the $\GG_a\times\GG_m$-action given by (\ref{pc}) and the third one is the multiplicative action given by~(\ref{dvum}).

\item[(iii)] If $n\geq 3$, then the action given by (\ref{act}) is the unique commutative action on~$Q_n$ up to equivalence.
\end{enumerate}
\end{theorem}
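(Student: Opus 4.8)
The plan is to reduce the classification of commutative actions on $Q_n$ to a classification of triples $(A,U,F)$, where $A$ is an $(n+2)$-dimensional commutative associative algebra with unit that splits as a direct sum of $r+1$ local algebras, $U\subseteq A$ is an $n$-dimensional generating subspace with $\dim(U\cap R)=l$ (where $l+r=n$), and $F=(\cdot,\cdot)$ is a symmetric bilinear form on $A$ of rank $n+2$ satisfying the compatibility $(ua_1,a_2)+(a_1,ua_2)=0$ for all $u\in U$ together with $(1,1)=0$. First I would argue that any commutative action on $Q_n$ lifts to a cyclic effective $G$-action on $\PP^{n+1}$: the open orbit is not contained in any hyperplane of $\PP^{n+1}$, since a $G$-stable hyperplane section of $Q_n$ together with the open orbit would give a $G$-invariant decomposition incompatible with $Q_n$ being a smooth quadric of dimension $n\ge 1$; hence the induced linear representation on $\KK^{n+2}$ is cyclic, and Theorem~\ref{t1} applies to produce $(A,U)$. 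Invariance of the quadratic form cutting out $Q_n$ then gives, via the isomorphism $\xi$ and Lemma~\ref{cc}, the form $F$ with the stated properties, and $F$ must be nondegenerate because $Q_n$ is smooth.

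The core of the argument is the converse classification: I would enumerate all such triples $(A,U,F)$ with $\dim A=n+2$. Write $A=A_0\oplus A_1\oplus\cdots\oplus A_r$ with each $A_i$ local, $A_0$ containing the unit component on which $U$ acts nilpotently in the requisite way; the condition $(ua_1,a_2)+(a_1,ua_2)=0$ forces strong orthogonality relations between the graded/filtered pieces of $A$ under the action of $U_a=U\cap R$ and the diagonalizable part $U_m$. Concretely, for $u\in U_m$ the operator of multiplication by $u$ is semisimple with nonzero eigenvalues on the non-unit local summands, and skew-self-adjointness of each such $u$ pairs eigenspaces for eigenvalue $\lambda$ with those for $-\lambda$; this rigidly constrains how many local summands there can be and their dimensions. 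For $u\in U_a$ one recovers exactly the filtration analysis of Sharoiko. Running through the bookkeeping: when $r=0$ one re-derives Sharoiko's unique additive triple and hence the action (\ref{act}) for every $n$; when $r\ge 1$ the eigenvalue-pairing constraint plus the dimension count $\dim A=n+2$ and $\dim U=n$ leaves only finitely many possibilities, which for $r=1$ force $A$ to be (up to iso) the algebra realizing (\ref{pc}) and, only when $n=2$, additionally the algebra realizing (\ref{dvum}); for $r=2$ only $n=2$ survives (the algebra $\KK\times\KK\times\KK\times\KK$-type decomposition realizing (\ref{dvum}) in the toric case); and for $r\ge 3$, or $r=2$ with $n\ge 3$, the constraints are contradictory, so no non-additive action exists. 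Finally I would check that the three exceptional triples indeed yield the actions (\ref{odnom}), (\ref{pc}), (\ref{dvum}) and that distinct triples give inequivalent actions, completing (i)--(iii).

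The main obstacle I anticipate is the combinatorial case analysis on the decomposition type of $A$ together with the simultaneous placement of $U_a$ and $U_m$ inside $A$: one must show that the nondegenerate form $F$ with $(1,1)=0$, the generation condition, and skew-adjointness of all of $U$ cannot coexist once $r$ is too large relative to $n$, and this requires carefully combining the eigenspace-pairing for $U_m$ with the nilpotent-filtration estimates for $U_a$ on each local summand. A secondary technical point is verifying that the lifted representation is genuinely cyclic (not merely that the orbit spans), i.e.\ ruling out that $Q_n$ sits inside a proper linear subspace in a way that would make the ambient $\PP^{n+1}$-action non-cyclic — but since $Q_n$ is nondegenerate as a quadric this is immediate. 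Once the triple $(A,U,F)$ is pinned down in each case, translating back to the explicit group actions (\ref{act}), (\ref{pc}), (\ref{odnom}), (\ref{dvum}) is a routine computation via the exponential map applied to $U$.
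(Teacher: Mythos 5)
Your overall framework --- lift the action to $\PP^{n+1}$ via $\mathrm{Aut}(Q_n)=\mathrm{PSO}_{n+2}(\KK)$, check cyclicity, and apply the generalized Hassett--Tschinkel correspondence to obtain a triple $(A,U,F)$ with $F$ nondegenerate, $F(1,1)=0$ and $F(ua_1,a_2)+F(a_1,ua_2)=0$ --- is exactly the paper's. But the decisive step, ruling out non-additive actions for $n\ge 3$, is only asserted (``the constraints are contradictory'') on the strength of an eigenvalue-pairing heuristic, and that is where the entire content of the theorem lies. The paper's mechanism is concrete: the compatibility gives $1\perp U$, $U\cdot U\subseteq U^{\perp}$, $\dim U^{\perp}=2$, and, when $l\ne 0$, $\dim(U^{\perp}\cap R)=1$, whence $U^{\perp}\cap A_d=\langle 1\rangle$; so for $0\ne u\in U_m$ the semisimple element $u^2$ must equal $\lambda\cdot 1$ with $\lambda\ne 0$, making multiplication by $u$ invertible --- yet it maps the $n$-dimensional $U$ into the $2$-dimensional $U^{\perp}$, forcing $n\le 2$. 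Nothing equivalent appears in your sketch, and the pairing of $\lambda$- with $(-\lambda)$-eigenspaces alone does not close the mixed case: for $l=2$, $r=1$, $n=3$ it is perfectly consistent with $A=A_0\oplus A_1\oplus A_2$, $\lambda_0=0$, $\lambda_2=-\lambda_1$, and one still needs the $U^{\perp}$ dimension count to reach a contradiction. For the purely multiplicative case the paper uses a much cheaper argument you could adopt: an $n$-dimensional torus cannot embed in $\mathrm{PSO}_{n+2}(\KK)$ for $n>2$ because the maximal torus has dimension $\lfloor(n+2)/2\rfloor<n$.

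There are also two concrete errors in your setup. First, $A$ need not split into $r+1$ local summands: that count comes from Knop--Lange for dense-orbit actions on $\PP^n$ with $\dim A=n+1$, whereas here $\dim A=n+2=\dim G+2$. Your own target examples violate it: the toric action (\ref{dvum}) on $Q_2$ has $A\cong\KK^4$ (four summands, $r=2$), and (\ref{odnom}) on $Q_1$ has $A\cong\KK^3$ (three summands, $r=1$). Second, the bookkeeping misassigns (\ref{dvum}) to $r=1$; it is the $r=2$ case, while $r=1$, $n=1$ yields (\ref{odnom}) and $r=l=1$, $n=2$ yields (\ref{pc}). Finally, uniqueness of the $\GG_a\times\GG_m$-action on $Q_2$ requires an actual computation (the paper derives the full multiplication and Gram tables of the $4$-dimensional algebra), not merely the observation that ``finitely many possibilities'' remain.
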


To prove this theorem we need some preparations. Assume that $Q_{n}$ is defined by a homogeneous equation
$$f(x_0,x_1,...,x_{n+1})=0.$$
Let $F$ be the polarization of the polynomial $f$, i.e. $F$ is a bilinear form on $\mathbb{K}^{n+2}$ such that $f(v)=F(v,v)$. Since $Q_{n}$ is smooth, $F$ is non-degenerate.

\vspace{5mm}
The following lemma is well known, see, e.g.~\cite[Lemma~2]{Sh}.
\begin{lemma} \label{l1} 
Let $Q_n\subseteq \mathbb{P}^{n+1}$ be a smooth quadric. Then the group of regular automorphisms $\mathrm{Aut}(Q_{n})$ coincides with $\mathrm{PSO}_{n+2}(\mathbb{K})$.
\end{lemma}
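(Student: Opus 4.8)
The plan is to establish the two inclusions separately. The inclusion $\mathrm{PSO}_{n+2}(\mathbb{K}) \subseteq \mathrm{Aut}(Q_n)$ is immediate: every $g \in O_{n+2}(\mathbb{K})$ preserves the bilinear form $F$, hence fixes the quadric $Q_n = \{F(x,x)=0\}$, and since scalar matrices act trivially on $\mathbb{P}^{n+1}$ we obtain a homomorphism from the image of $O_{n+2}(\mathbb{K})$ in $\mathrm{PGL}_{n+2}(\mathbb{K})$ to $\mathrm{Aut}(Q_n)$; this homomorphism is injective and its identity component is exactly $\mathrm{PSO}_{n+2}(\mathbb{K})$. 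The real content is the reverse inclusion: every biregular automorphism of $Q_n$ is the restriction of a projective-linear map preserving the quadric.

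For the reverse inclusion, the first and main step is to show that every automorphism $\phi \in \mathrm{Aut}(Q_n)$ extends to an automorphism of the ambient $\mathbb{P}^{n+1}$. I would argue through the canonical class. By adjunction, $K_{Q_n} \cong \mathcal{O}_{Q_n}(-n)$, since $K_{\mathbb{P}^{n+1}} = \mathcal{O}(-(n+2))$ and $Q_n$ has degree $2$. As the canonical class is an intrinsic invariant, $\phi^* K_{Q_n} \cong K_{Q_n}$, that is, $\phi^*\mathcal{O}_{Q_n}(n) \cong \mathcal{O}_{Q_n}(n)$. Since $\mathrm{Pic}(Q_n)$ is torsion-free (it is $\mathbb{Z}$ for $n \geq 3$ by the Grothendieck--Lefschetz theorem, $\mathbb{Z}^2$ for $n=2$ via $Q_2 \cong \mathbb{P}^1\times\mathbb{P}^1$, and $\mathbb{Z}$ for $n=1$ via $Q_1 \cong \mathbb{P}^1$), the bundle $\mathcal{O}_{Q_n}(1)$ is the unique $n$-th root of $\mathcal{O}_{Q_n}(n)$, whence $\phi^*\mathcal{O}_{Q_n}(1) \cong \mathcal{O}_{Q_n}(1)$.

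Granting this, the remaining steps are routine. The restriction map $H^0(\mathbb{P}^{n+1}, \mathcal{O}(1)) \to H^0(Q_n, \mathcal{O}_{Q_n}(1))$ is an isomorphism of $(n+2)$-dimensional spaces (no nonzero linear form lies in the ideal of $Q_n$, so it is injective, and the dimensions agree), and $Q_n \hookrightarrow \mathbb{P}^{n+1}$ is recovered as the embedding by the complete linear system $|\mathcal{O}_{Q_n}(1)|$. Because $\phi$ preserves $\mathcal{O}_{Q_n}(1)$, it acts linearly on $H^0(Q_n, \mathcal{O}_{Q_n}(1))$ and hence induces $\tilde\phi \in \mathrm{PGL}_{n+2}(\mathbb{K})$ restricting to $\phi$ on $Q_n$. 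Finally, a projective-linear $\tilde\phi$ preserving $Q_n$ lifts to $g \in \mathrm{GL}_{n+2}(\mathbb{K})$ with $g^{\mathsf{T}} F g = \lambda F$ for some $\lambda \in \mathbb{K}^\times$, since $Q_n$ determines $f$ up to scalar; rescaling $g$ by a square root of $\lambda^{-1}$, which exists as $\mathbb{K}$ is algebraically closed, we may take $g \in O_{n+2}(\mathbb{K})$. Thus $\tilde\phi$ lies in the image of $O_{n+2}(\mathbb{K})$, giving the reverse inclusion.

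The main obstacle is the extension step. The two ingredients to pin down there are the torsion-freeness of $\mathrm{Pic}(Q_n)$, for which I would invoke Grothendieck--Lefschetz when $n\geq 3$ and the explicit low-dimensional models otherwise, and the resulting uniqueness of $n$-th roots of a line bundle, which is precisely what upgrades preservation of the canonical class to preservation of the hyperplane class. I would also record the (harmless for this paper) subtlety that for even $n$ the full image of $O_{n+2}(\mathbb{K})$ in $\mathrm{PGL}_{n+2}(\mathbb{K})$ can be disconnected; since only connected groups act in the sequel, it is the identity component $\mathrm{PSO}_{n+2}(\mathbb{K})$ that is used throughout.
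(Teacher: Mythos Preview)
Your argument is correct and is the standard one, but note that the paper does not actually prove this lemma: it simply records it as well known and refers to \cite[Lemma~2]{Sh}. So there is no ``paper's own proof'' to compare against, and your sketch would serve perfectly well as an independent justification.

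A couple of comments on the content. Your use of adjunction plus torsion-freeness of $\mathrm{Pic}(Q_n)$ to pin down $\phi^*\mathcal{O}_{Q_n}(1)\cong\mathcal{O}_{Q_n}(1)$ is clean and works uniformly, including the case $n=2$ where $\mathrm{Pic}\cong\mathbb{Z}^2$: the equation $2L=(2,2)$ still has the unique solution $L=(1,1)$, so even the factor-swapping automorphism of $\mathbb{P}^1\times\mathbb{P}^1$ preserves the hyperplane class, as it must. The cohomology computation $h^0(\mathcal{O}_{Q_n}(1))=n+2$ is immediate from the ideal sequence, so the restriction map on global sections is indeed an isomorphism.

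You are also right to flag the connectedness subtlety: what your argument actually yields is $\mathrm{Aut}(Q_n)=\mathrm{PO}_{n+2}(\mathbb{K})$, the image of $O_{n+2}(\mathbb{K})$ in $\mathrm{PGL}_{n+2}(\mathbb{K})$, and for even $n$ this has two components with identity component $\mathrm{PSO}_{n+2}(\mathbb{K})$. The paper's formulation is therefore slightly loose for even $n$, but as you observe, only the identity component matters for the connected groups $\GG_a^l\times\GG_m^r$ considered afterwards, so nothing downstream is affected.
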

Now let $G\cong \GG_a^l\times \GG_m^r$ act effectively on $Q_{n}$ with an open orbit. This lemma implies that every $G$-action on $Q_{n}$ can be lifted to a $G$-action on~$\mathbb{P}^{n+1}$. Two actions on $Q_{n}$  are equivalent if and only if their liftings are  equivalent. Denote by $\pi\colon \mathrm{SO}_{n+2}(\mathbb{K})\rightarrow \mathrm{PSO}_{n+2}(\mathbb{K})$ the natural homomorphism. If $n$ is odd, $\pi$ is an isomorphism. If $n$ is even, we have $\mathrm{Ker}\,\pi\cong\mathbb{Z}/2\mathbb{Z}$. 

For every $G\subseteq \mathrm{PSO}_{n+2}(\mathbb{K})$ we can consider the preimage  $\widehat{G}=~\pi^{-1}(G)~\subseteq~\mathrm{SO}_{n+2}(\mathbb{K})$.  So an effective $G$-action on $Q_{n}$ induces an orthogonal linear representation with respect to scalar product $F$
$$\rho\colon\widehat{G}\rightarrow \mathrm{SO}_{n+2}(\mathbb{K}).$$

Since the kernel of $\widehat{G}\rightarrow G$ is finite, the neutral component $\widehat{G}^0$ is isomorphic to $\GG_a^l\times \GG_m^r$. Since $G=\pi(\widehat{G})$-action on $Q_n$ has an open orbit,  $\pi(\widehat{G}^0)$-action on $Q_n$ has an open orbit as well.

\begin{lemma}
Let a linear algebraic group $G$ act effectively on $Q_{n}$ with an open orbit. Then the induced representation $\rho\colon\widehat{G}^0\rightarrow \mathrm{SO}_{n+2}(\mathbb{K})$ is cyclic.
\end{lemma}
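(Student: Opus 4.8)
The plan is to exploit the correspondence between cyclic rational representations and algebras (Theorem~\ref{t1}) in reverse: cyclicity of $\rho$ is equivalent to the existence of a vector $v\in\mathbb{K}^{n+2}$ whose $\widehat{G}^0$-orbit spans $\mathbb{K}^{n+2}$, which in turn is equivalent to the existence of a point of $\mathbb{P}^{n+1}$ whose $G$-orbit (for $G=\pi(\widehat{G}^0)$) is not contained in any proper projective subspace. So the real task is to show that the open $G$-orbit in $Q_n$ is not contained in any hyperplane $H\subseteq\mathbb{P}^{n+1}$.

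First I would fix a point $x_0$ in the open $G$-orbit $O$ and suppose, for contradiction, that $O\subseteq H$ for some hyperplane $H$. Since $O$ is $G$-stable and $H$ is a linear subspace, its linear span $\langle O\rangle\subseteq H$ is a proper $G$-invariant subspace of $\mathbb{P}^{n+1}$; equivalently, the affine cone $\widehat{\langle O\rangle}\subseteq\mathbb{K}^{n+2}$ is a proper $\widehat{G}^0$-invariant subspace. The next step is to observe that $O$ is dense in $Q_n$, hence $\langle O\rangle=\langle Q_n\rangle$. Thus it suffices to show that a smooth quadric $Q_n\subseteq\mathbb{P}^{n+1}$ spans all of $\mathbb{P}^{n+1}$ — and indeed it does, for otherwise $Q_n$ would be contained in a hyperplane $\mathbb{P}^n$, forcing $f$ to be divisible by a linear form and hence $Q_n$ to be reducible (a union of two hyperplanes), contradicting smoothness. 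This gives the contradiction $H\supseteq\langle O\rangle=\mathbb{P}^{n+1}$.

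Concretely, the argument runs: $\langle O\rangle=\langle\overline{O}\rangle=\langle Q_n\rangle=\mathbb{P}^{n+1}$, where the middle equality uses that $O$ is open and dense in the irreducible variety $Q_n$, and the last equality is the non-degeneracy of $F$ (if $Q_n\subseteq\{x_{n+1}=0\}$ then $f$ would not involve $x_{n+1}$, so $\partial f/\partial x_{n+1}\equiv 0$ everywhere on $Q_n$, contradicting smoothness, or more directly $F$ would be degenerate). Translating back through $\xi$: a cyclic vector for $\rho$ is any nonzero vector in the cone over $x_0\in O$, since its $\widehat{G}^0$-orbit has linear span equal to the cone over $\langle O\rangle=\mathbb{P}^{n+1}$, i.e.\ all of $\mathbb{K}^{n+2}$.

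I do not expect a serious obstacle here; the only point needing a little care is the passage between the $G$-action on $\mathbb{P}^{n+1}$ and the $\widehat{G}^0$-representation on $\mathbb{K}^{n+2}$ — one must check that the cone over a $G$-invariant projective subspace is $\widehat{G}^0$-invariant, which is immediate since $\rho$ is linear and $\pi\circ\rho$ induces the given $G$-action, and that the orbit $\widehat{G}^0\cdot v$ spans the cone over $G\cdot x_0$, which follows because scaling $v$ only rescales the orbit. If the paper has not yet fixed a cyclic vector at this stage, I would simply phrase the conclusion as: the span of $\widehat{G}^0\cdot v$ is a $\widehat{G}^0$-submodule of $\mathbb{K}^{n+2}$ whose image in $\mathbb{P}^{n+1}$ contains the open orbit, hence equals $\mathbb{K}^{n+2}$, so $v$ is cyclic.
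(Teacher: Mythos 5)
Your proposal is correct and follows essentially the same route as the paper: pick a point of the open orbit, lift it to $v\in\mathbb{K}^{n+2}$, and deduce cyclicity from the fact that the (dense) orbit, hence $Q_n$ itself, is not contained in any proper projective subspace. You merely spell out the details (density of the orbit, why a smooth quadric spans $\mathbb{P}^{n+1}$) that the paper leaves implicit.
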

\begin{proof}
Let $x\in Q_{n}\subseteq \mathbb{P}^{n+1}(\KK)$ be an element of the open $\pi(\widehat{G}^0)$-orbit. Let $x=\langle v \rangle$ for $v\in\KK^{n+2}$. Then $\rho(\widehat{G}^0)v$ is not contained in any proper subspace $W\subseteq \KK^{n+2}$ since~$Q_n$ is not contained in any proper subspace of $ \mathbb{P}^{n+1}(\KK)$. So $v$ is a cyclic vector for~$\rho$.
\end{proof}
Since $Q_n$ is given by the equation $f(x)=0$, the cyclic vector $v$  is isotropic, i.e. $F(v,v)=0$.

By definition $\widehat{G}\hookrightarrow \mathrm{SO}_{n+2}(\mathbb{K})$ is an injection, thus the representation $\rho\colon\widehat{G}^0~\rightarrow~\mathrm{SO}_{n+2}(\mathbb{K})$ is faithful. So we can apply Theorem~\ref{t1} and Lemma~\ref{cc}. We obtain an $(n+2)$-dimensional algebra $A$ with $n$-dimensional generating  subspace~$U$ such that $\dim U\cap R=l$. Also we obtain scalar product $F(\cdot,\cdot)_v$ on $A$ such that $F(1,1)_v=0$ and

\begin{equation}\label{ff}
F(ua_{1},a_{2})_v+F(a_{1},ua_{2})_v=0 \ \ \forall u \in U\ \  \forall a_{1}, a_{2} \in A.
\end{equation}

The following lemma is analogous to a result obtained in \cite[Lemma~6]{Sh}, see also \cite[Lemma~2]{AP}.

\begin{lemma} \label{l3}
The scalar product $F(\cdot,\cdot)_v$ has the following properties:

\smallskip

\begin{enumerate}
\item[(i)] every $u \in U$ is orthogonal to $1$; 
\item[(ii)] for all $u_1, u_2 \in U$ we have $u_1u_2 \in U^{\bot}$ ; 
\item[(iii)] the restriction of the form $F$ to $U$ is non-degenerate;
\item[(iv)] the dimension of $(U^{\bot})$ equals $2$;
\item[(v)] if $l\neq 0$, then $\mathrm{dim}(U^{\bot} \cap R) = 1$.
\end{enumerate}

\end{lemma}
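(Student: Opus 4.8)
The plan is to extract all five properties from the single relation~\eqref{ff} together with the facts that $U$ generates $A$, that $F$ is non-degenerate on $A$, that $\dim A=n+2$, $\dim U=n$, and $F(1,1)_v=0$. Throughout I write $(\cdot,\cdot)$ for $F(\cdot,\cdot)_v$.

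First, for~(i): set $a_1=a_2=1$ in~\eqref{ff}. Since $u\cdot 1=u$, we get $2(u,1)=0$, hence $(u,1)=0$ for every $u\in U$. For~(ii): apply~\eqref{ff} with $a_1=u_2$, $a_2=1$ (for $u=u_1$), giving $(u_1u_2,1)+(u_2,u_1)=0$; but then also apply it with $u=u_1$, $a_1=1$, $a_2=u_2$ — wait, more directly, for arbitrary $u_3\in U$ use~\eqref{ff} with $u=u_3$, $a_1=u_1u_2\cdot(\text{something})$... The cleaner route: to show $(u_1u_2,u_3)=0$ for all $u_3\in U$, write $(u_1u_2,u_3)=-(u_2,u_1u_3)$ by~\eqref{ff} with $u=u_1$, and then $-(u_2,u_1u_3)=(u_1u_3,u_2)$, and applying~\eqref{ff} again with $u=u_3$ gives $(u_1u_3,u_2)=-(u_1,u_3u_2)$. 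So we have the cyclic-type identity $(u_1u_2,u_3)=(u_3u_1,u_2)=(u_2u_3,u_1)$, and iterating once more shows each equals its own negative, hence is zero. Thus $u_1u_2\perp U$, which is~(ii); note this immediately forces $U\cdot U\subseteq U^\perp$.

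For~(iii): suppose $u_0\in U$ lies in the radical of $F|_U$, i.e. $(u_0,u)=0$ for all $u\in U$. Combined with~(i) we have $u_0\perp U$ and $u_0\perp 1$. Since $U$ generates $A$ as an algebra with unit, $A$ is spanned by $1$, elements of $U$, and products of elements of $U$; using~\eqref{ff} repeatedly one reduces $(u_0, u_{i_1}\cdots u_{i_k})$ to a sum of terms of the form $\pm(u_0 u_{j_1}\cdots u_{j_{k-1}}, u_{j_k})$... actually the slick statement is that $U^\perp$ is an ideal: if $a\perp U$ then for any $u\in U$ and any $b\in A$, $(ua,b)=-(a,ub)$, and since $U$ generates $A$, spanning $b$ by monomials in $U$ lets one show $ua\perp U$ too — more carefully, $U^\perp\cap(\text{the subspace reachable})$; the point is $(u_0\cdot A, A)$ can be shown to vanish, contradicting non-degeneracy of $F$ on $A$ unless $u_0=0$. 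I expect this is the step needing the most care: one must show that $u_0\perp U$ and $u_0\perp 1$ propagate to $u_0\perp A$, using that~\eqref{ff} lets multiplication be moved across the form and that $U$ generates. Concretely, claim $W:=\{a\in A : (u_0,\,ab)=0\ \forall b\in A\}$ is all of $A$: it contains $1$ (since $(u_0,b)=$ reduce via~\eqref{ff}), and it is stable under multiplication by $U$, hence $W\supseteq\langle 1, U, U^2,\dots\rangle=A$; taking $b=1$ gives $(u_0,A)=0$, so $u_0=0$ by non-degeneracy.

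For~(iv): from~(i) and~(ii), $U^\perp\supseteq \langle 1\rangle + U\cdot U$. The generating property gives $A=\langle 1\rangle\oplus U\oplus (U\cdot U + U^3+\cdots)$ as a sum (not nec. direct), and non-degeneracy of $F$ forces $\dim U^\perp=\dim A-\dim U = (n+2)-n=2$, provided we also know $F$ restricted to $U$ is non-degenerate, which is~(iii): indeed $\dim U^\perp = \dim A - \dim U + \dim(U\cap U^\perp)$, and $U\cap U^\perp=0$ by~(iii), so $\dim U^\perp=2$. Finally~(v): assume $l\neq 0$. By Remark~\ref{rr}, $U=U_a\oplus U_m$ with $U_a=U\cap R$ of dimension $l\geq 1$ and $U_m$ a space of semisimple elements. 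We know $1\in U^\perp$ and $1\notin R$. I want a second vector in $U^\perp\cap R$. Pick $0\neq u_a\in U_a\subseteq R$; consider powers of $u_a$: since $u_a$ is nilpotent, let $k$ be maximal with $u_a^k\neq 0$; if $k\geq 2$ then $u_a^k\in U\cdot U\subseteq U^\perp$ by~(ii) and $u_a^k\in R$, and one checks $u_a^k$ is not proportional to $1$ (it lies in $R$), giving a second basis vector of $U^\perp$ inside $R$. The case $k=1$, i.e. $u_a^2=0$, needs separate treatment: here one uses~\eqref{ff} with $a_1=a_2$ a suitable element to produce an isotropic radical vector, or argues that $U^\perp$, being $2$-dimensional and containing the non-isotropic-or-isotropic vector $1$ with $(1,1)=0$, must by non-degeneracy of $F|_{U^\perp}$ (which follows since $U^\perp{}^\perp=U$ and $U\cap U^\perp=0$) contain another isotropic vector $w$ with $(1,w)\neq 0$; then one shows $w\in R$ by decomposing $w=w_d+w_r$ along $A=A_d\oplus R$ and using that the pairing between the diagonalizable part and... this is the delicate point. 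The anticipated main obstacle is precisely clause~(v): showing the extra element of $U^\perp$ can be taken in $R$ requires understanding how $F$ interacts with the $A_d\oplus R$ decomposition, and may require invoking that $U^\perp$ is $F$-non-degenerate together with a dimension count on $U^\perp\cap R$ versus $U^\perp\cap A_d$.
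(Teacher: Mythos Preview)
Your treatments of (i), (ii), and (iv) are essentially the paper's. In (ii) there is a sign slip (the form is symmetric, so $-(u_2,u_1u_3)=-(u_1u_3,u_2)$), but the intended chain does close up: one gets $(u_1u_2,u_3)=-(u_2u_3,u_1)=(u_3u_1,u_2)=-(u_1u_2,u_3)$. The paper routes this through $(1,u_1u_2u_3)$ instead. For (iv), non-degeneracy of $F$ on $A$ already gives $\dim U^{\perp}=\dim A-\dim U=2$; (iii) is not needed here.

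The genuine gaps are in (iii) and (v).

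In (iii) your set $W=\{a:(u_0,ab)=0\ \forall b\}$ contains $1$ iff $(u_0,b)=0$ for all $b$, which is exactly the conclusion; so the argument is circular. Shuttling factors via \eqref{ff} only converts $(u_0,u_1\cdots u_k)$ into $\pm(1,u_0u_1\cdots u_k)$, which is no easier. The paper bypasses this with a dimension count: since $1\notin U$ (indeed, $1\in U$ would force $2F(a_1,a_2)_v=0$ for all $a_1,a_2$ by \eqref{ff}), the space $\langle 1,U\rangle$ has dimension $n+1$. If $0\neq u_0\in U$ were orthogonal to $U$, then $\langle 1,U\rangle$ would lie in both $\langle 1\rangle^{\perp}$ and $\langle u_0\rangle^{\perp}$, hence equal both; taking orthogonals gives $\langle u_0\rangle=\langle 1\rangle$, contradicting $u_0\in U$.

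In (v) your claim $u_a^{k}\in U\cdot U$ is only valid for $k=2$, and the case $u_a^{2}=0$ is left open (the attempt to locate the second basis vector of $U^{\perp}$ inside $R$ via the $A_d\oplus R$ decomposition does not go through as stated). The paper's argument is a one-liner that avoids any case split: take $0\neq b\in U\cap R$; by (iii) choose $u\in U$ with $F(b,u)_v\neq 0$; then $F(bu,1)_v=-F(b,u)_v\neq 0$, so $bu\neq 0$. Now $bu\in U^{\perp}$ by (ii) and $bu\in R$ because $R$ is an ideal, giving $\dim(U^{\perp}\cap R)\geq 1$; since $1\in U^{\perp}\setminus R$ and $\dim U^{\perp}=2$, equality holds.
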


\begin{proof}
Assertion (i) follows from (\ref{ff}) with $a_1=a_2=1$. 

(ii) Note that
$$F(u_1u_2,u_3)_v=-F(u_1,u_2u_3)_v=F(1,u_1u_2u_3)_v=-F(u_3,u_1u_2)_v \ \ \forall u_1,u_2,u_3 \in U.$$
Therefore $F(u_1u_2,u_3)_v=0$ and $u_1u_2 \in U^{\bot}\ \ \forall u_1, u_2 \in U$.

(iii) Assume that for some $0 \neq u \in U$ we have  $U\subseteq \langle u\rangle^\bot$. Hence, $\langle u\rangle^\bot=\langle 1, U\rangle$. But $\langle 1\rangle^\bot=\langle 1, U\rangle$. Since $1\notin U$, this gives a contradiction.

(iv) We have $\mathrm{dim}(A)=n+2$, $\mathrm{dim}(U)=n$ and $F$ is non-degenerate. Therefore, $\dim U^{\bot} = 2$. 

(v) Let $b\in U\cap R$. By (iii) there is $u\in U$ such that $F(b,u)_v=-F(bu,1)_v \neq 0$. By (ii) we have $bu \in U^{\bot} \cap R$.
Hence, $\mathrm{dim}(U^{\bot} \cap R) \geq 1$. Since $\mathrm{dim}(U^{\bot})=2$ and $1 \in U^{\bot} \setminus R$, we obtain $\mathrm{dim}(U^{\bot} \cap R) = 1$.
\end{proof}

\begin{proposition}\label{propp}
Let $l\geq 1$, $r\geq 1$ and $n=l+r\geq 3$. Then there is no effective $\GG_a^l\times \GG_m^r$-action on $Q_n$ with an open orbit. 
\end{proposition}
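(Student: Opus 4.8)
The strategy is to argue by contradiction. Assuming such an action exists, we apply the constructions of this section preceding Lemma~\ref{l3} to obtain an $(n+2)$-dimensional algebra $A=A_1\oplus\dots\oplus A_{r+1}$ which is a direct sum of $r+1$ local algebras, with radical $R=\bigoplus_i R_i$ and diagonalizable subalgebra $A_d=\langle e_1,\dots,e_{r+1}\rangle$, so that $\dim A_d=r+1$ and $A=A_d\oplus R$; together with the generating subspace $U$ and the form $F(\cdot,\cdot)_v$. By Remark~\ref{rr} we have $U=U_a\oplus U_m$ with $U_a=U\cap R$ of dimension $l$, $U_m=U\cap A_d$ of dimension $r$, and $U_m\cap\langle 1\rangle=0$. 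By Lemma~\ref{l3}, $U\cdot U\subseteq U^{\bot}$ and $U^{\bot}=\langle 1,w\rangle$; since $l\geq 1$, we may take $w\in R$, a generator of $U^{\bot}\cap R$.

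The key first step is to intersect the inclusion $U\cdot U\subseteq\langle 1,w\rangle$ with the subalgebra $A_d$ and with the ideal $R$. As $1\notin R$, $0\neq w\in R$ and $A_d\cap R=0$, we get $\langle 1,w\rangle\cap A_d=\langle 1\rangle$ and $\langle 1,w\rangle\cap R=\langle w\rangle$, and combining with $A_d\cdot A_d\subseteq A_d$ and $A_d\cdot R\subseteq R$ we obtain
\[
U_m\cdot U_m\subseteq\langle 1\rangle\qquad\text{and}\qquad U_m\cdot U_a\subseteq\langle w\rangle .
\]
The first inclusion forces $r\leq 1$: for $u=\sum_i c_ie_i\in U_m$ the relation $u^2=\mu\cdot 1$ gives $c_i^2=\mu$ for all $i$, so every vector of $U_m\subseteq A_d\cong\KK^{r+1}$ has all its coordinates equal up to sign, and (comparing two linearly independent such vectors over a field of characteristic zero) a linear subspace with this property has dimension at most $1$. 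Hence $r=1$, and so $l=n-1\geq 2$.

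Now $A=A_1\oplus A_2$ and $U_m=\langle u_0\rangle$ with $u_0\neq 0$; writing $u_0=c_1e_1+c_2e_2$, the relation $c_1^2=c_2^2$ together with $U_m\cap\langle 1\rangle=0$ gives $c_1=-c_2\neq 0$, so after rescaling $u_0=e_1-e_2$. Multiplication by $u_0$ acts on $R=R_1\oplus R_2$ as the identity on $R_1$ and as minus the identity on $R_2$, hence bijectively, so $\dim(u_0\cdot U_a)=\dim U_a=l$. On the other hand $u_0\cdot U_a\subseteq U_m\cdot U_a\subseteq\langle w\rangle$, so $l\leq 1$, contradicting $l\geq 2$. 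This proves the proposition.

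The step I expect to be the main obstacle is the first one, which carries the real content: the containment $U\cdot U\subseteq U^{\bot}$ supplied by Lemma~\ref{l3} looks weak, but reading it simultaneously inside $A_d$ and inside $R$ rigidifies the multiplication on $U$, pinning $U_m\cdot U_m$ into $\langle 1\rangle$ and $U_m\cdot U_a$ into $\langle w\rangle$; after that only elementary linear algebra remains. It is worth tracking the hypotheses: $l\geq 1$ is used to place $w$ inside $R$ (Lemma~\ref{l3}(v)) and to guarantee $U_a\neq 0$, while $r\geq 1$ makes $A$ genuinely decomposable and is indispensable for the final contradiction --- when $r=0$ one is in the additive case, where the action~(\ref{act}) does exist.
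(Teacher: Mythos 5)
Your argument is correct, but it reaches the contradiction by a different route than the paper. The paper's proof is a one-stroke version of your idea: it fixes a nonzero $u\in U_m$, shows exactly as you do that $u^2\in U^{\bot}\cap A_d=\langle 1\rangle$, hence $u^2=\lambda\cdot 1$ with $\lambda\neq 0$ (as $u$ is semisimple and nonzero), and then observes that the invertible multiplication operator $\phi_u$ sends the $n$-dimensional space $U$ into the $2$-dimensional space $U^{\bot}$ --- impossible for $n\geq 3$. You instead split this tension between injectivity of $\phi_u$ and the smallness of $U^{\bot}$ into its semisimple and nilpotent parts: a coordinate computation in $A_d$ forces $r\leq 1$, and for $r=1$ the injectivity of $\phi_{u_0}$ on $R$ squeezes the $l$-dimensional $U_a$ into the line $U^{\bot}\cap R$, giving $l\leq 1$. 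Your version is longer but yields slightly more structural information (the bound $r\leq 1$ and the explicit form of $U_m$, which is essentially what the paper's Lemma~\ref{un} extracts later for $Q_2$). One caveat: you assert that $A$ splits into exactly $r+1$ local summands, i.e.\ $\dim A_d=r+1$. That count comes from the Knop--Lange correspondence for actions on $\PP^m$ \emph{with open orbit}, which is not the situation for the lifted action on $\PP^{n+1}$ (the orbit there has codimension one); Theorem~\ref{t1} only gives $\dim U_m=r$, whence $\dim A_d\geq r+1$ (since $1\notin U_m$) but not equality. Fortunately this is harmless: both of your steps go through verbatim with an arbitrary number $k$ of local summands --- in the first step $c_i^2=\mu$ for all $i$ still pins $\dim U_m\leq 1$, and in the second step all coordinates of $u_0$ are nonzero, so $\phi_{u_0}$ is still bijective on $R$ --- but the claim should either be proved or avoided.
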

\begin{proof}
Suppose there exists such an action. Let us fix $u\neq 0 \in U_m$, see Remark~\ref{rr}. Then $u^2\in U^\bot$ by Lemma~\ref{l3}(ii). But $u^2\in A_s$. By  Lemma~\ref{l3}(v) we obtain $\dim(U^{\bot}~\cap~A_s)\leq~1$. Lemma~\ref{l3}(i) implies $U^{\bot} \cap A_s=\langle 1\rangle$. Hence, $u^2=\lambda 1$, where $\lambda\neq 0\in \KK$. Let us consider the multiplication operator by $u$: 
$$\phi_u: A \rightarrow A, \qquad a\mapsto ua.$$
Since $u^2=1$, the operator $\phi_u$ is non-degenerate. But it follows from Lemma~\ref{l3}(ii) that $\phi_u(U)\subseteq U^\bot$, where $\dim U=n\geq 3$ and $\dim U^\bot=2$ by Lemma~\ref{l3}(iv). 

A contradiction.
\end{proof}

\begin{lemma}\label{tm}
\begin{enumerate}
\item[(i)]  Let $n=r > 2$. Then there is no effective $\GG_m^r$-action on~$Q_n$ with an open orbit.
\item[(ii)] If there exists an effective $\GG_m^n$-action on $Q_n$ with an open orbit, then it is unique up to equivalence. 
\end{enumerate}
\end{lemma}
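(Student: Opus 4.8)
The plan is to use that in this case $l=0$ and $r=n$, so by Remark~\ref{rr} the generating subspace satisfies $U=U_m$ and hence consists of pairwise commuting semisimple elements. First I would note that the $(n+2)$-dimensional algebra $A$ attached to the action by Theorem~\ref{t1} is then reduced, so $A\cong\KK^{n+2}$. Fixing idempotents $e_1,\dots,e_{n+2}$ with $1=e_1+\cdots+e_{n+2}$, I would write $(c_{ij})$ for the symmetric non-degenerate Gram matrix of $F(\cdot,\cdot)_v$ in this basis. For $u=\sum u_ie_i\in U$ multiplication by $u$ is the diagonal operator $\mathrm{diag}(u_1,\dots,u_{n+2})$, and relation~(\ref{ff}) translates into $c_{ij}(u_i+u_j)=0$ for all $i,j$ and all $u\in U$. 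Since $u$ is recovered from this operator by applying it to $1$, the space $U$ embeds into $\mathfrak{t}:=\{(d_1,\dots,d_{n+2})\mid c_{ij}(d_i+d_j)=0\ \text{for all}\ i,j\}$, so $n=\dim U\le\dim\mathfrak{t}$.

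The next step is to bound $\dim\mathfrak{t}$ by a graph-theoretic count. Let $\Gamma$ be the graph on $\{1,\dots,n+2\}$ having an edge $\{i,j\}$ (resp.\ a loop at $i$) precisely when $c_{ij}\neq 0$ (resp.\ $c_{ii}\neq 0$). Non-degeneracy of $F$ forbids isolated vertices, and $(c_{ij})$ is block-diagonal along the connected components of $\Gamma$. On a component carrying a loop or an odd cycle the relations $d_i+d_j=0$ force $d\equiv 0$ there; on a loop-free bipartite component with bipartition $X\sqcup Y$ they leave exactly one free parameter, while the corresponding block of $F$ has the shape $\left(\begin{smallmatrix}0&B\\ B^{\top}&0\end{smallmatrix}\right)$ and so is non-degenerate only when $|X|=|Y|$; hence every such component is ``balanced'', i.e.\ has an even number $\geq 2$ of vertices. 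Writing $g$ for their number one finds $\dim\mathfrak{t}=g$, and since these components are vertex-disjoint, $2g\leq n+2$. Combining with $n\leq\dim\mathfrak{t}=g$ gives $2n\leq n+2$, that is $n\leq 2$, which is exactly (i).

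For (ii) I would run the equality analysis, noting first that by (i) only $n\in\{1,2\}$ need to be treated (for $n\geq 3$ the statement is vacuous). If $n=2$ then $g=2$ and both balanced components have exactly two vertices, so $\Gamma$ is a perfect matching on four vertices and, up to relabelling, $U=\mathfrak{t}=\langle e_1-e_2,\,e_3-e_4\rangle$; if $n=1$ then $g=1$, the balanced component is a single edge $\{i,j\}$, the third vertex carries a loop, and $U=\mathfrak{t}=\langle e_i-e_j\rangle$. In both situations the pair $(A,U)$ is uniquely determined up to isomorphism, so Theorem~\ref{t1} yields a unique representation $\rho$ up to equivalence, hence a unique lift to $\PP^{n+1}$ and, since actions on $Q_n$ are equivalent iff their liftings are (Lemma~\ref{l1}), a unique commutative $\GG_m^n$-action on $Q_n$ up to equivalence — which is then the action~(\ref{odnom}) for $n=1$ and~(\ref{dvum}) for $n=2$.

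I expect the main obstacle to be the structural claim in the second paragraph: deducing from~(\ref{ff}) together with non-degeneracy of $F$ that every component of $\Gamma$ on which $\mathfrak{t}$ is non-trivial is bipartite with balanced parts, and pinning down $\dim\mathfrak{t}$ component-by-component (handling loops, odd cycles, and the non-degeneracy of the off-diagonal block $B$ over a balanced bipartite component). Once this is in place, the reduction $A\cong\KK^{n+2}$ and the concluding identification of the unique pair $(A,U)$ are routine.
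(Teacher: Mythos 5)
Your argument takes a genuinely different route from the paper's. The paper disposes of (i) in two lines: an effective $\GG_m^n$-action embeds an $n$-dimensional torus into $\mathrm{Aut}(Q_n)=\mathrm{PSO}_{n+2}(\KK)$, whose rank is $\lfloor (n+2)/2\rfloor<n$ once $n>2$; and it gets (ii) from conjugacy of maximal tori, since for $n\leq 2$ the image of the torus \emph{is} a maximal torus of $\mathrm{PSO}_{n+2}(\KK)$. You instead stay inside the algebraic framework of Theorem~\ref{t1}: $A\cong\KK^{n+2}$ because $U=U_m$ consists of commuting semisimple elements, and the combinatorics of the Gram matrix of $F(\cdot,\cdot)_v$ in the basis of idempotents recovers the same numerical bound $n\leq g\leq(n+2)/2$. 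I checked your component analysis (loops and odd cycles kill $\mathfrak{t}$ on their component; loop-free bipartite components contribute one parameter each and must be balanced by non-degeneracy of the off-diagonal block), and part (i) is correct --- in effect you recompute the rank of $\mathfrak{so}(F)$ in coordinates adapted to $A$. It is longer than the paper's argument but has the virtue of using only the machinery the paper already sets up for the mixed $\GG_a^l\times\GG_m^r$ case.

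Part (ii) has a gap at the very last step. Uniqueness of the pair $(A,U)$ gives, via Theorem~\ref{t1}, uniqueness of $\rho$ only up to conjugation in $\mathrm{GL}_{n+2}(\KK)$, hence uniqueness of the lifted action on $\PP^{n+1}$ up to conjugation in $\mathrm{PGL}_{n+2}(\KK)$. But equivalence of the induced actions on $Q_n$ requires the conjugating automorphism to lie in $\mathrm{Aut}(Q_n)=\mathrm{PSO}_{n+2}(\KK)$, and a $\mathrm{PGL}$-conjugation intertwining two torus actions need not carry $Q_n$ to itself: the torus preserves a whole family of smooth quadrics (for $n=2$, all $c_{12}x_1x_2+c_{34}x_3x_4=0$ with $c_{12}c_{34}\neq 0$). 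So the sentence ``unique $(A,U)$, hence unique action on $Q_n$'' skips the point where the invariant form must be controlled --- this is exactly why the paper's Lemma~\ref{un} records the scalar-product table alongside the multiplication table. The gap is easy to close in your coordinates: the possible Gram matrices are supported on the matching $\{1,2\},\{3,4\}$ (resp.\ on $\{i,j\}$ plus a loop), and any two are carried to one another, up to global scaling, by a diagonal matrix commuting with the torus, so the triples $(A,U,F(\cdot,\cdot)_v)$ are all isomorphic. Alternatively, you can simply observe that for $n\leq 2$ the image is a maximal torus of $\mathrm{PSO}_{n+2}(\KK)$ and invoke conjugacy of maximal tori, which is the paper's argument.
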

\begin{proof}
(i) It is well known that the dimension of the maximal torus in $\mathrm{SO}_{n+2}(\mathbb{K})$ is equal to  $\frac{n+2}{2}$ if $n$ is even, and it is equal to $\frac{n+1}{2}$ if $n$ is odd. Since $n>2$, the dimension of the maximal torus in $\mathrm{PSO}_{n+2}(\mathbb{K})$ is less than $n$.

(ii) All maximal tori in $\mathrm{PSO}_{n+2}(\mathbb{K})$ are conjugated. Therefore, all effective $\GG_m^n$-action on $Q_n$ are conjugated by an automorphism of $Q_n$.
\end{proof}

\begin{lemma}\label{un}
There exists a unique up to equivalence $\GG_a\times\GG_m$-action on $Q_2$ with an open orbit.
\end{lemma}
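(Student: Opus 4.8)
The plan is to specialize the classification machinery developed above to the case $n=2$, $l=r=1$ and to show that the resulting triple is unique up to isomorphism; existence of an action is already provided by the explicit formula~(\ref{pc}). By Lemma~\ref{l1}, every effective $\GG_a\times\GG_m$-action on $Q_2$ with an open orbit lifts to a faithful cyclic orthogonal representation $\rho\colon\widehat G^0\to\mathrm{SO}_4(\KK)$, and two actions are equivalent precisely when these representations are. By Theorem~\ref{t1} and Lemma~\ref{cc}, such a representation yields a $4$-dimensional commutative algebra $A$, a $2$-dimensional subspace $U$ generating $A$ with $\dim(U\cap R)=1$, and a nondegenerate symmetric form $F=F(\cdot,\cdot)_v$ with $F(1,1)_v=0$ satisfying~(\ref{ff}), and isomorphic triples correspond to equivalent actions. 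So it suffices to prove that $(A,U,F)$ is unique up to isomorphism.

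First I would determine $A$. Being a finite-dimensional commutative algebra, $A$ is a product of local algebras, and it is not local: if it were, then $A_d=\KK\cdot 1$, so the line $U_m=A_d\cap U$ (Remark~\ref{rr}) would equal $\KK\cdot 1$ and $1\in U$, which is impossible because $U\subseteq\mathfrak{so}_4(\KK)$ and $\mathfrak{so}_4(\KK)$ contains no nonzero scalar matrix. Thus $A$ is one of $\KK^4$, $\KK^2\times\KK[x]/(x^2)$, $\KK\times\KK[x]/(x^3)$, $\KK\times\KK[x,y]/(x,y)^2$, $\KK[x]/(x^2)\times\KK[y]/(y^2)$, and I would rule out the first four. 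For $\KK^4$ we have $R=0$, contradicting $\dim(U\cap R)=1$. For $\KK\times\KK[x,y]/(x,y)^2$ the nilpotent generator $u_a\in U\cap R$ satisfies $u_a^2=0$ and $u_mu_a\in\KK u_a$, so $U$ generates an algebra of dimension at most $3$. For $\KK^2\times\KK[x]/(x^2)$ and $\KK\times\KK[x]/(x^3)$, the fact that $U$ generates $A$ forces the semisimple generator to take distinct values on the local summands and $u_a$ to have nonzero component in the degree-one part of the nilpotent summand; then Lemma~\ref{l3}, namely $U^2\subseteq U^\perp$ together with $\dim U^\perp=2$ and $\dim(U^\perp\cap R)=1$, is violated by a dimension count on the span of $1,u_m^2,u_mu_a,u_a^2$ in $U^\perp$ (either this span is too large, or $U^\perp$ lies inside $A_d$ and meets $R$ trivially). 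Hence $A\cong\KK[s]/(s^2)\times\KK[t]/(t^2)$.

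Next I would determine $U$ and then $F$. Writing $e_1,e_2$ for the idempotents and $s,t$ for the nilpotents, the generator $u_a\in U\cap R$ must have both components nonzero for $U$ to generate, so after an automorphism $u_a=s+t$; the semisimple generator $u_m=\alpha e_1+\beta e_2$ has $\alpha\ne\beta$ (otherwise $1\in U$), and $u_m^2\in U^\perp$ with $\dim U^\perp=2$, $\dim(U^\perp\cap R)=1$ forces $\alpha=-\beta$, hence after rescaling $u_m=e_1-e_2$; so $(A,U)$ is unique up to isomorphism. Setting $w=u_mu_a$, so that $\{1,u_m,u_a,w\}$ is a basis with $u_m^2=1$, $u_mw=u_a$ and $u_a^2=u_aw=w^2=0$, evaluating~(\ref{ff}) on pairs of these basis vectors expresses every value of $F(\cdot,\cdot)_v$ in terms of the single scalar $p=F(1,w)_v$, which is nonzero because $w\notin\langle 1,U\rangle=\langle 1\rangle^\perp$. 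Finally, the automorphism of $(A,U)$ that scales the nilradical uniformly multiplies $p$ by an arbitrary element of $\KK^\times$, so all these forms are equivalent; thus $(A,U,F)$ is unique up to isomorphism, which, together with~(\ref{pc}), proves the lemma.

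The step I expect to be the main obstacle is discarding the algebras with a three-dimensional local summand, $\KK\times\KK[x]/(x^3)$ and $\KK^2\times\KK[x]/(x^2)$: here, unlike in the other excluded cases, $U$ genuinely can generate $A$, so ruling them out requires combining the generation constraint with the full orthogonality information of Lemma~\ref{l3} and exploiting how restrictive the condition $U^2\subseteq U^\perp$ is when $\dim U^\perp=2$. Once $A$ has been identified, the remaining steps are routine bookkeeping with~(\ref{ff}).
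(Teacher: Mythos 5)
Your proof is correct, and it reaches the same multiplication and scalar-product data as the paper, but by a genuinely different route in its first half. The paper never invokes the classification of $4$-dimensional commutative algebras: it fixes $u\in U_a$, $w\in U_m$ and derives the multiplication table directly from Lemma~\ref{l3} and semisimplicity (first $w^2=\lambda\cdot 1$ because $w^2$ is semisimple and lies in $U^\bot$, then $h=uw$ completes a basis, then $u^2=\lambda h$ is killed by multiplying by $w$ and using nilpotency of $u$), after which the form is pinned down by~(\ref{ff}) exactly as you do. You instead enumerate the five non-local $4$-dimensional algebras and eliminate four of them; your eliminations are sound (in particular, for $\KK^2\times\KK[x]/(x^2)$ and $\KK\times\KK[x]/(x^3)$ the key points you use --- that $u_m^2\in U^\bot\cap A_d=\langle 1\rangle$ forces the eigenvalues of $u_m$ to be $\pm\alpha$, which is impossible for three distinct values, resp.\ forces $u_mu_a=-\alpha u_a\in U^\bot$, contradicting nondegeneracy of $F|_U$ --- are exactly the right ones), and your identification of $U$ and of $F$ up to the single scalar $p=F(1,w)_v$, removed by rescaling the nilradical, matches the paper's normalizations $w^2=1$, $F(u,w)_v=-1$. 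The trade-off is that the paper's computation is shorter and self-contained, needing no list of algebras, while your argument makes visible which abstract algebras are excluded and why (generation failure versus incompatibility with Lemma~\ref{l3}), and would have let you shortcut two of the cases by citing the $r+1$ maximal ideals from Proposition~\ref{kl} had you wished. Both proofs rely on formula~(\ref{pc}) for existence, so no gap there.
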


\begin{proof}
A $\GG_a\times\GG_m$-action on $Q_2$ with an open orbit corresponds to a 4-dimensional algebra $A$ with 2-dimensional generating subspace $U=U_a\oplus U_m$, where $$\dim U_a=\dim U_m=1.$$ Fix $u\neq 0\in U_a$, $w\neq 0\in U_m$. Since $w$ is a semisimple element of $A\subseteq \mathrm{Mat}_{4\times 4}(\KK)$, $w^2$ is a semisimple element as well. Therefore, $w^2\in\langle 1,w\rangle$. By Lemma~\ref{l3}(i), (ii) and (iii) $1\in U^\bot$, $w^2\in U^\bot$ and $w\notin U^\bot$. Therefore, $w^2=\lambda 1$, where $\lambda\in \KK\setminus\{0\}$. We can change $w$ to get $w^2=1$.

We obtain $$F(w,w)_v=-F(1,w^2)_v=F(1,1)_v=0.$$ By Lemma~\ref{l3}(iii) the restriction of $F$ to $U$ is non-degenerate. Hence, $F(u,w)_v\neq 0$. We can change $u$ to get $F(u,w)_v=-1$. 

Denote $h=uw$. Then $h\in U^\bot\cap R$, $hw=uw^2=u$ and $$F(1,h)_v=-F(u,w)_v=1.$$ In particular, $h\neq 0$. So $\{1,u,w,h\}$ is a basis of $A$.

By Lemma~\ref{l3}(ii) the element $u^2$ belongs to $U^\bot$. Therefore, $u^2\in U^\bot\cap R=\langle h \rangle$, i.e. $u^2=\lambda h$. Hence, $u^2w=\lambda hw=\lambda uw^2=\lambda u$. Since $u$ is a nilpotent element we have $\lambda=0$, i.e. $u^2=0$. This implies $uh=u^2w=0$ and $h^2=u^2w^2=0$. Therefore, $F(u,u)_v=-F(u^2,1)_v=0$ and $F(h,h)_v=-F(h^2,1)_v=0$.

So, we obtain the following tables for multiplication and scalar product in~$A$:

\begin{center}
    \begin{tabular}{c||c|c|c|c}
        $\cdot$&1&u&w&h\\
        \hline
          \hline
         1&1&u&w&h\\
         \hline
         u&u&0&h&0\\
         \hline
         w&w&h&1&u\\
         \hline
         h&h&0&u&0
    \end{tabular}
    \qquad \qquad \qquad
        \begin{tabular}{c||c|c|c|c}
        F$(\cdot,\cdot)_v$&1&u&w&h\\
        \hline
          \hline
         1&0&0&0&1\\
         \hline
         u&0&0&-1&0\\
         \hline
         w&0&-1&0&0\\
         \hline
         h&1&0&0&0
    \end{tabular}
\end{center}

This implies uniqueness of $A$ up to isomorphism and hence uniqueness of the corresponding action up to equivalence.

\end{proof}
\begin{proof}[Proof of Theorem~\ref{maint}]
Our goal is to describe all commutative actions of $\GG_a^l\times\GG_m^r$ on $Q_n$. Proposition~\ref{propp} implies that the only possible actions are additive actions ($r=0$), multiplicative actions ($l=0$) and $\GG_a\times\GG_m$-actions. By Proposition~\ref{shh} for every $n$ there is a unique additive action on~$Q_n$ given by (\ref{act}). By Lemma~\ref{un} there exists a unique effective $\GG_a\times\GG_m$-action on~$Q_2$ with an open orbit. It is easy to check that (\ref{pc}) gives such an action. Formulas (\ref{odnom}) and (\ref{dvum}) give effective $\GG_m^n$-actions on~$Q_n$ with open orbits for $n=1$ and $2$. By Lemma~\ref{tm} these two actions are the only effective $\GG_m^n$-actions on~$Q_n$ with open orbits.
\end{proof}

\end{document}